\documentclass[12pt]{article}
\usepackage{tikz}
\usetikzlibrary{arrows}
\usepackage[framemethod=tikz]{mdframed}
\usepackage{wrapfig}
\usepackage{amsmath,amssymb}
\usepackage{type1cm}
\usepackage{amsthm}
\usepackage{mathrsfs}
\usepackage{mathtools}
\usepackage{enumerate}
\AtBeginDocument{%
   \def\MR#1{}
}

%%%%%%%%%% margin
%%%%% horizontal
\setlength{\paperwidth}{210mm}
\setlength{\hoffset}{0pt}
\setlength{\oddsidemargin}{30mm}
\setlength{\textwidth}{\paperwidth}
\addtolength{\textwidth}{-2\oddsidemargin}
\setlength{\marginparwidth}{0pt}
\setlength{\marginparsep}{0pt}
\addtolength{\oddsidemargin}{-1in}
\setlength{\evensidemargin}{\oddsidemargin}
%%%%% vertical
\setlength{\topskip}{10pt plus 2pt}
\setlength{\voffset}{0pt}
\setlength{\topmargin}{30mm}
\setlength{\textheight}{\paperheight}
\addtolength{\textheight}{-2\topmargin}
\addtolength{\topmargin}{-\headheight}
\addtolength{\topmargin}{-\headsep}
\addtolength{\topmargin}{-1in}
\addtolength{\topmargin}{10mm}
\addtolength{\textheight}{-15mm}
\addtolength{\footskip}{15pt}
\addtolength{\skip\footins}{5pt}

\DeclareMathOperator{\GL}{GL}

\DeclareMathOperator{\ch}{ch}
\DeclareMathOperator{\Adm}{Adm}

\DeclareMathOperator{\cox}{cox}
\DeclareMathOperator{\ad}{ad}
\DeclareMathOperator{\Ad}{Ad}
\DeclareMathOperator{\Hom}{Hom}
\DeclareMathOperator{\id}{id}

\newcommand\F{\mathbb{F}}

\newcommand\fp{\mathfrak p}
\newcommand\vp{\varpi}
\newcommand\Fq{\mathbb F_q}
\newcommand\Fqn{\mathbb F_{q^{n_0}}}
\newcommand\p{\mathrm{pfn}}
\newcommand\aFq{\overline{\mathbb F}_q}
\newcommand\dt{\dot\tau}
\newcommand\ds{\dot s}
\newcommand\de{\dot \eta}
\newcommand\W{\mathbb W}
\newcommand\cO{\mathcal O}

\newcommand\cG{\mathcal G}
\newcommand\cF{\mathcal F}
\newcommand\sL{\mathscr L}
\newcommand\Gm{\mathbb G_m}

\newcommand\R{\mathbb R}
\newcommand\A{\mathbb A}
\newcommand\G{\mathbb G}

\newcommand\Z{\mathbb Z}
\newcommand\tW{\widetilde W}
\newcommand\SW{{^S\widetilde W}}
\newcommand\tS{\tilde S}
\newcommand\inv{\mathrm{inv}}
\newcommand\SAdm{{^S\mathrm{Adm}}}
\newcommand\lr{\lambda_{i,r}}
\newcommand\mr{\mu_{i,r}}
\newcommand\nr{\nu_{i,r}}
\newcommand\wir{w_{i,r}}
\newcommand\dwr{\dot{w}_{i,r}}
\newcommand\gbr{g_{b,i,r}}
\newcommand\gbred{g^{\mathrm{red}}_b}
\newcommand\aV{V_b^{\mathrm{adm}}}
\newcommand\aL{\mathscr{L}_{0,b}^{\mathrm{adm}}}
\newcommand\kb{\kappa(b)}
\newcommand\JO{J_{\mathcal O}}
\newcommand\Da{D_{k_0/n_0}}
\newcommand\Xr{X_{\wir}(b)}
\newcommand\XR{X_{\lr}(b)}
\newcommand\XL{X_{\wir}(b)_{\sL_0}}
\newcommand\Dr{\Omega^{n'-1}_{\Fqn}}
\newcommand\ba{\bold a}
\newcommand\CC{{of finite Coxeter type}}

\theoremstyle{definition}
\newtheorem{theo}{Theorem}[section]
\newtheorem{prop}[theo]{Proposition}
\newtheorem{defi}[theo]{Definition}
\newtheorem{lemm}[theo]{Lemma}
\newtheorem{coro}[theo]{Corollary}

\newtheorem{rema}[theo]{Remark}

\newtheorem{thm}{Theorem}[section]

\begin{document}
\title{On Some Simple Geometric Structure of Affine Deligne-Lusztig Varieties for $\GL_n$}
\author{Ryosuke Shimada}
\date{}
\maketitle

\begin{abstract}
In this paper we study the geometric structure of affine Deligne-Lusztig varieties $X_{\lambda}(b)$ for $\mathrm{GL}_n$ and $b$ basic.
Motivated by \cite{GH} and \cite{CI2}, we consider a new condition on $\lambda$.
If this is satisfied, then $X_{\lambda}(b)$ is the disjoint union of classical Deligne-Lusztig varieties times finite-dimensional affine spaces.
\end{abstract}

\section{Introduction}
Let $F$ be a non-archimedean local field with finite field $\F_q$ of prime characteristic $p$, and let $L$ be the completion of the maximal unramified extension of $F$.
Let $\sigma$ denote the Frobenius automorphism of $L/F$.
Further, we write $\cO,\ \fp$ for the valuation rings and the maximal ideal of $L$.
Finally, we denote by $\vp$ a uniformizer of $F$ (and $L$) and by $v_L$ the valuation of $L$ such that $v_L(\vp)=1$.

Let $G$ be a split connected reductive group over $\Fq$ and let $T$ be a split maximal torus of it.
Let $B$ be a Borel subgroup of $G$ containing $T$. 
For a cocharacter $\lambda\in X_*(T)$, let $\vp^{\lambda}$ be the image of $\vp\in \mathbb G_m(F)$ under the homomorphism $\lambda\colon\mathbb G_m\rightarrow T$.

We fix a dominant cocharacter $\lambda\in X_*(T)$ and $b\in G(L)$.
Then the affine Deligne-Lusztig variety $X_{\lambda}(b)$ is the locally closed reduced $\aFq$-subscheme of the affine Grassmannian defined as
$$X_{\lambda}(b)(\aFq)=\{xG(\mathcal O)\in G(L)/G(\mathcal O)\mid x^{-1}b\sigma(x)\in G(\mathcal O)\vp^{\lambda}G(\mathcal O)\}.$$
Analogously, we can also define the affine Deligne-Lusztig varieties associated to arbitrary parahoric subgroups (especially Iwahori subgroups).

The affine Deligne-Lusztig variety $X_{\lambda}(b)$ carries a natural action (by left multiplication) by the group
$$J=\{g\in G(L)\mid g^{-1}b\sigma(g)=b\}.$$
Since $b$ is usually fixed in the discussion, we mostly omit it from the notation.
We denote $J\cap G(\cO)$ by $\JO$.

The geometric properties of affine Deligne-Lusztig varieties have been studied by many people.
For example, the non-emptiness criterion and the dimension formula is already known.
Besides them, it is known that in certain cases, the (closed) affine Deligne-Lusztig variety admits a simple description.
Let $P$ be a standard parahoric subgroup of $G(L)$, where $G$ is assumed to be simple.
We define
$$X(\lambda, b)_P=\{g\in G(L)/P\mid g^{-1}b\sigma (g)\in \bigcup_{w\in\Adm(\lambda)}PwP\},$$
where $\Adm(\lambda)$ is the $\lambda$-admissible set (see \ref{Coxetertype}). 
In \cite{GH} (see also \cite{GH2}), G\"{o}rtz and He studied $X(\lambda, b)_P$ in the case that $\lambda$ is minuscule and $P$ is maximal parahoric.
They introduced a notion of ``Coxeter type'' and proved that if $(G, \lambda, P)$ is of Coxeter type and if $b$ is a basic element such that $X(\lambda, b)_P\neq \emptyset$, then $X(\lambda, b)_P$ is naturally a union of classical Deligne-Lusztig varieties.
Furthermore, the main result in \cite{GHN} determines when $X(\lambda, b)_P$ is naturally a union of classical Deligne-Lusztig varieties.
In particular, the existence of such a simple description is independent of $P$.
Finally, the work \cite{GHN3} classified the cases where $(G, \lambda, P)$ is of Coxeter type for arbitrary $P$ and $\lambda$.

In the Iwahori case, Chan and Ivanov \cite{CI2} gave an explicit description of certain Iwahori-level affine Deligne-Lusztig varieties for $\GL_n$.
Each component of the disjoint decomposition described there is a classical Deligne-Lusztig variety times finite-dimensional affine space, and they point out the similarity between their description and the results in \cite{GH}.
Using this description, they gave a geometric realization of the local Langlands correspondence.

In the hyperspecial case, the author \cite{Shimada} studied affine Deligne-Lusztig varieties for $\GL_3$ and $b$ basic.
The main theorem there determines the pairs $(\lambda, b)$ such that $X_{\lambda}(b)$ admits the simple description suggested in \cite{CI2}.

In this paper, we treat another variant of ``Coxeter case''.
Let $\tW$ be the extended affine Weyl group.
Both of the affine Weyl group $W_a$ and the finite Weyl group $W_0$ are subgroups of $\tW$.
Moreover, we have two semidirect product decomposition $\tW=W_a\rtimes \Omega$ and $\tW=W_0\ltimes X_*(T)$, where $\Omega\subset \tW$ is the subgroup of length $0$ elements.
We denote by $\tS$ the set of simple affine reflections, and by $S$ the set of simple reflections.
Then the definition of ``Coxeter type'' in \cite{GH} is concerned with the Coxeter condition on (the subgroup of) the Coxeter group $(W_a, \tS)$.
On the other hand, our definition of ``{\it finite} Coxeter type'' is based on the Coxeter condition on the {\it finite} Coxeter group $(W_0, S)$.
The purpose of this paper is to study this condition for $G=\GL_n$ and a hyperspecial parahoric subgroup $G(\cO)\subset G$.

The main results are summarized below.
If $n=3$ and $\lambda$ is not central, then these results coincide with \cite[Corollary 6.5]{Shimada}.
\begin{thm}
The dominant cocharacters $\lambda$ {\CC} are listed in Theorem \ref{class}.
\end{thm}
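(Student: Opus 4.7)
The statement is a classification of dominant cocharacters of finite Coxeter type for $\GL_n$, so the proof naturally splits into showing that each $\lambda$ on the list in Theorem \ref{class} satisfies the condition, and showing that no other dominant $\lambda$ does. I would structure the argument accordingly.

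The first step is to unpack the finite Coxeter condition fully in terms of combinatorial data attached to $\lambda$. Since the condition is stated on the finite Coxeter group $(W_0, S) = (S_n,\{s_1,\ldots,s_{n-1}\})$ rather than on the affine Coxeter group, I would translate it into an assertion about certain elements of $W_0$ arising from $\lambda$ — most naturally, about the finite parts of reduced expressions for elements of $\Adm(\lambda)$, or equivalently about the image of a distinguished subset of $\Adm(\lambda)$ under the projection $\tW \to W_0$ coming from $\tW = W_0 \ltimes X_*(T)$. For $\GL_n$, dominant $\lambda$ is a weakly decreasing sequence $(\lambda_1,\ldots,\lambda_n)$, and the finite Weyl group data only sees the pattern of coincidences among the $\lambda_i$. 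So I expect to reformulate the condition purely in terms of the partition $\mu$ of $n$ recording the multiplicities of the distinct values of the $\lambda_i$ (plus possibly one integer controlling the ``gap'' between those values).

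The second step is the sufficiency direction. For each multiplicity partition $\mu$ that appears on the list, I would exhibit by hand a Coxeter element of $W_0$ witnessing the finite Coxeter condition, and verify the Coxeter relations directly from the known reduced-word combinatorics of the admissible set $\Adm(\lambda)$ for $\GL_n$. This part should reduce to a routine though case-dependent verification.

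The third step — and the main obstacle — is the necessity direction: ruling out every $\lambda$ whose multiplicity pattern is not on the list. The difficulty is that $S_n$ contains many Coxeter parabolic subsystems of varying types, so one cannot argue merely by counting simple reflections; instead one must show that the specific subset of $W_0$ produced by $\lambda$ fails the Coxeter defining relations, typically because the natural braid relations in $S_n$ force extra identifications. I would try to handle this by induction on $n$, using the $n=3$ classification from \cite[Corollary 6.5]{Shimada} as the base case and reducing a general $\lambda$ to a smaller one by merging or deleting coordinates, matched with an explicit length/inversion-count obstruction for the ``borderline'' multiplicity patterns just outside the list. The core combinatorial input will be a precise description of $\Adm(\lambda) \cap W_0 w$ for the relevant translation $w$, controlled via the permutation-pattern description of affine permutations for $\GL_n$.
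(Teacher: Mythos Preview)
Your proposal rests on a misreading of the definition. ``Finite Coxeter type'' in this paper is \emph{not} a combinatorial condition on the group $(W_0,S)$ asserting that some subset of $W_0$ satisfies Coxeter-type relations; it is the geometric requirement that in the decomposition
\[
X_{\lambda}(b)=\bigsqcup_{w\in \SAdm(\lambda)^\circ}\pi(X_w(b))
\]
only the pieces with $p_1(w)$ a Coxeter element of $W_0$ are nonempty. Equivalently, $\lambda$ is of finite Coxeter type if and only if $X_w(b)=\emptyset$ for every $w\in \SAdm(\lambda)^\circ\setminus\SAdm(\lambda)^\circ_{\cox}$, where $b$ is basic in $B(G,\lambda)$. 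So ``verifying Coxeter relations'', ``braid relations forcing extra identifications'', and a ``permutation-pattern description of affine permutations'' are not the relevant tools at all: what is needed are emptiness/non-emptiness criteria for Iwahori-level affine Deligne--Lusztig varieties.

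Concretely, the paper's proof runs as follows. For necessity one produces, for each $\lambda$ off the list, an explicit $w\in\SAdm(\lambda)^\circ$ with $p_1(w)$ not Coxeter (via Lemma~\ref{coxlemm}) and then shows $X_w(b)\neq\emptyset$ using the $P$-alcove criterion (Theorem~\ref{Palcove}); this is Proposition~\ref{NCC} together with some residual cases. For sufficiency one enumerates $\SAdm(\lambda)^\circ$ for each listed $\lambda$ and proves $X_w(b)=\emptyset$ for every non-Coxeter $w$ using Proposition~\ref{emptiness} (the criterion involving $p_2(w)^{-1}p_1(w)p_2(w)$). Your outline never invokes either of these criteria, and no amount of Coxeter-group combinatorics will substitute for them: the question of whether a given $X_w(b)$ is empty genuinely depends on the alcove geometry and on $b$, not just on the image of $w$ in $W_0$. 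Your intuition that the answer is governed by the multiplicity pattern of $\lambda$ (with gap data) is correct as a heuristic for the final answer, but the mechanism you propose for proving it is the wrong one.
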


\begin{thm}[see Theorem \ref{maintheo}]
Let $\lambda$ be a dominant cocharacter {\CC}, and let $b$ be a basic element in $G(L)$ such that $X_{\lambda}(b)\neq\emptyset$.
Then the variety $X_{\lambda}(b)$ is a disjoint union, indexed by $J/\JO$ or $\sqcup_{j=1}^{n-1} J/\JO$, of classical Deligne-Lusztig varieties times finite-dimensional affine spaces.
Moreover, these classical Deligne-Lusztig varieties are all associated to a Coxeter element.
\end{thm}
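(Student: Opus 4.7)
The strategy is to combine the classification in Theorem \ref{class} with an explicit geometric analysis in the spirit of \cite{CI2} and \cite{Shimada}. Since $b$ is basic and $X_\lambda(b)\neq\emptyset$, after $\sigma$-conjugation we may assume $b$ is a standard representative of its $\sigma$-conjugacy class; the $J$-action on $X_\lambda(b)$ is then transitive on the set of $\JO$-orbits of top-dimensional irreducible components, which is what will produce the outer indexing by $J/\JO$.

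The first step is to reduce, via this $J$-action, to analysing a fundamental piece. Fix a distinguished base point $g_0 G(\cO) \in X_\lambda(b)$; then
$$X_\lambda(b) \;=\; \bigsqcup_{[j] \in J/\JO}\, j \cdot Z,$$
where $Z$ is a $\JO$-invariant locally closed subscheme whose structure must be identified. The second step is to compute $Z$ by translating the defining condition $x^{-1} b \sigma(x) \in G(\cO)\vp^\lambda G(\cO)$ into a lattice-chain description on $L^n$, refined through the Iwahori-level stratification indexed by $\Adm(\lambda)$, as is done in \cite{Shimada}. The finite Coxeter condition on $\lambda$ should force the resulting system of equations to split into two decoupled blocks: (a) unconstrained coordinates, contributing an affine-space factor $\A^d$, and (b) coordinates whose defining equations are of Lang--Steinberg type and cut out a classical Deligne--Lusztig variety $Y(c)$ attached to a Coxeter element $c$ of some standard Levi of $W_0$. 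The alternative index set $\sqcup_{j=1}^{n-1} J/\JO$ is expected to arise in those cases where an additional discrete invariant of $g_0 G(\cO)$---for instance the length of the associated semi-module, taking $n-1$ values---labels non-$J$-conjugate fundamental pieces, so that the fundamental piece itself is a disjoint union of $n-1$ isomorphic components.

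The main obstacle is the second step: one must verify, case by case on the list of Theorem \ref{class}, that the free and the constrained parameters genuinely decouple, so that $Z \cong Y(c) \times \A^d$ as schemes rather than merely as an affine bundle. For cocharacters outside the finite-Coxeter list, mixed terms between the two blocks would obstruct such a clean product decomposition; the finite Coxeter hypothesis is precisely what guarantees their vanishing, and isolates the Coxeter element $c$ as the unique admissible element contributing to the top stratum. Once $Z$ has been identified, a comparison of $\dim X_\lambda(b)$ (known from the standard dimension formula) with $\dim Y(c) + d$, together with a check that the $J$-translates of $Z$ are pairwise disjoint, confirms that the displayed decomposition exhausts $X_\lambda(b)$ and that no strata have been missed.
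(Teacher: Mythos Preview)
Your outline has the right overall shape but contains genuine gaps at the two crucial points, and it misidentifies the source of the extra $\sqcup_{j=1}^{n-1}$ indexing.

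First, the decomposition $X_\lambda(b)=\bigsqcup_{[j]\in J/\JO} j\cdot Z$ with $Z$ a $\JO$-invariant locally closed subscheme is not something you can simply assert from transitivity of the $J$-action on top-dimensional components. In the paper this is established by going through the Iwahori level: one uses the decomposition $X_\lambda(b)=\bigsqcup_{w\in\SAdm(\lambda)^\circ_{\cox}}\pi(X_w(b))$, proves that each $X_w(b)$ already decomposes as $\bigsqcup_{J/\JO}\Dr\times\A^\p$ (Proposition~\ref{ADLVwir} and Corollary~\ref{ADLVIwahori}, relying on the explicit parametrisation $x\mapsto g_b(x)\vp^{\mu}$ from \cite{CI2} together with the Deligne--Lusztig reduction method), and then shows separately that the projection $\pi$ restricted to each piece is an immersion (Lemma~\ref{inj}) and that the images are both closed and open (Lemma~\ref{closed} plus the argument in Theorem~\ref{maintheo}). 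None of this is automatic, and in particular the openness/closedness of the $J$-translates of your $Z$ requires real work that your dimension count at the end does not supply.

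Second, your ``second step'' is where the content lies, and the proposal does not actually carry it out: saying that the finite Coxeter condition ``should force'' the equations to decouple into an affine block and a Lang--Steinberg block is a hope, not an argument. The paper does not verify any such decoupling directly at the hyperspecial level; instead it inherits the product structure $\Dr\times\A^\p$ from the Iwahori-level varieties $X_{\vp^{\nu}\tau}(b)$, where the Chan--Ivanov parametrisation makes it explicit.

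Finally, the index set $\sqcup_{j=1}^{n-1}$ does not come from a discrete invariant of a single base point such as a semi-module length. It comes from the fact that for $\lambda=\lambda_{1,r}$ the set $\SAdm(\lambda)^\circ_{\cox}$ has exactly $n-1$ elements, one for each Coxeter element $c_j=(1\,2\cdots j\,n\,n{-}1\cdots j{+}1)$; these give $n-1$ genuinely different Iwahori-level varieties $X_{\vp^{\nu_j}\tau}(b)$ whose images in $X_{\lambda_{1,r}}(b)$ are pairwise disjoint. So the extra $n-1$ is a feature of the admissible set, not of the lattice geometry at a chosen point.
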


Whenever we consider a scheme or an ind-scheme, for simplicity, we pass to the perfection even in the equal characteristic case.
Nevertheless, it is easy to check that in the equal characteristic case, all proofs and ingredients in this paper work for non-perfect rings if we speak of ``universal homeomorphism'' instead of ``isomorphism''.

The paper is organized as follows.
In Section \ref{notation} we fix notation.
In Section \ref{ADLV}, we recollect properties of affine Deligne-Lusztig varieties.
In Section \ref{condition}, we introduce a new variant of ``Coxeter case'', and give a classification using the non-emptiness criterion for affine Deligne-Lusztig varieties in the affine flag variety.
In Section \ref{structure}, we describe the geometric structure of affine Deligne-Lusztig varieties in the affine Grassmannian.
For this, we first study the Iwahori level affine Deligne-Lusztig varieties using the results in \cite{CI2} and the Deligne-Lusztig reduction method developed in \cite{GH3}.
After that, we relate the Iwahori and hyperspecial cases.

\textbf{Acknowledgments:}
I would like to thank my advisor Yoichi Mieda for his constant support and encouragement.
This work was supported by the WINGS-FMSP program at the Graduate School of Mathematical Science, the University of Tokyo. 
This work was also supported by JSPS KAKENHI Grant number JP21J22427.

\section{Notation}
\label{notation}
Throughout the paper we will use the following notation.
Let $F$ be a non-archimedean local field with finite field $\F_q$ of prime characteristic $p$, and let $L$ be the completion of the maximal unramified extension of $F$.
Let $\sigma$ denote the Frobenius automorphism of $L/F$.
Further, we write $\cO,\ \fp$ (resp.\ $\cO_F,\ \fp_F$) for the valuation rings and the maximal ideal of $L$ (resp.\ $F$).
Finally, we denote by $\vp$ a uniformizer of $F$ (and $L$) and by $v_L$ the valuation of $L$ such that $v_L(\vp)=1$.

If $F$ has positive characteristic, let $\W$ be the ring scheme over $\Fq$ where for any $\Fq$-algebra $R$, $\W(R)=R[[\vp]]$.
If $F$ has mixed characteristic, let $\W$ be the $F$-ramified Witt ring scheme over $\Fq$ so that $\W(\Fq)=\cO_F$ and $\W(\aFq)=\cO$.
Below, we restrict to the case that $R$ is a perfect $\Fq$-algebra.
In this case, the elements of $\W(R)$ can be written in the form $\Sigma_{i\geq 0}[r_i]\vp^i$, where $[r_i]$ is the Teichm\"{u}ller lift of $r_i\in R$ if $\ch F=0$ and $[r_i]=r_i$ if $\ch F>0$.
For any scheme $X$, we write $X^\p$ for the perfection of $X$.
We identify $(\W/\vp^h\W)^\p$ with the affine space $\A^{h,\p}$ under this choice of coordinates.

From now and until the end of this paper, we set $G=\GL_n$.
Let $T$ be the torus of diagonal matrices, and we choose the subgroup of upper triangular matrices $B$ as Borel subgroup. 
Further, we set $K=G(\cO)$.
Let us define the Iwahori subgroup $I\subset K$ as the inverse image of the {\it lower} triangular matrices under the projection $G(\cO)\rightarrow G(\aFq),\  \vp\mapsto 0$.

Let $\Phi=\Phi(G,T)$ denote the set of roots of $T$ in $G$.
We denote by $\Phi_+$ (resp.\ $\Phi_-$) the set of positive (resp.\ negative) roots distinguished by $B$.
Let $\chi_{ij}$ be the character $T\rightarrow \Gm$ defined by $\mathrm{diag}(t_1,t_2,\ldots, t_n)\mapsto t_i{t_j}^{-1}$.
Using this notation, we have $\Phi=\{\chi_{ij}\mid i\neq j\}$, $\Phi_+=\{\chi_{ij}\mid i< j\}$ and $\Phi_-=\{\chi_{ij}\mid i> j\}$.
We let $$X_*(T)_+=\{\lambda\in X_*(T)| \langle \alpha, \lambda \rangle\geq0\ \text{for all}\ \alpha\in \Phi_+\}$$ denote the set of dominant cocharacters.
Through the isomorphism $X_*(T)\cong \Z^n$, ${X_*(T)}_+$ can be identified with the set $\{(m_1,\cdots, m_n)\in \Z^n|m_1\geq \cdots \geq m_n\}$.

We embed $X_*(T)$ into $T(L)$ by $\lambda\mapsto \vp^{\lambda}$, where by $\vp^{\lambda}$ we denote the image of $\vp$ under the map $$\lambda\colon L^{\times}=\Gm(L)\rightarrow T(L).$$
The extended affine Weyl group $\tW$ is defined as the quotient $N_{G(L)}T(L)/T(\cO)$.
This can be identified with the semi-direct product $W_0\ltimes X_{*}(T)$, where $W_0$ is the finite Weyl group of $G$.
Let $e_1,\ldots, e_n\in L^n$ be the canonical basis.
For any permutation $\tau$ of degree $n$, we denote by $\dt$ the matrix of the form $(e_{\tau(1)}\ e_{\tau(2)} \ \cdots\ e_{\tau(n)})$.
In our case, we can identify the symmetric group of degree $n$ with $W_0$ by sending
$\tau$ to the element in $W_0$ represented by $\dt$.
We have a length function $\ell\colon \tW\rightarrow \Z_{\geq 0}$ given as
$$\ell(w_0\vp^{\lambda})=\sum_{\alpha\in \Phi_+, w_0\alpha\in \Phi_-}|\langle \alpha, \lambda\rangle+1|+\sum_{\alpha\in \Phi_+, w_0\alpha\in \Phi_+}|\langle \alpha, \lambda\rangle|,$$
where $w_0\in W_0$ and $\lambda\in X_*(T)$.

Let $S\subset W_0$ denote the subset of simple reflections, i.e., adjacent transpositions.
In this paper, let us write $s_1=(1\ 2), s_2=(2\ 3), \ldots, s_{n-1}=(n-1\ n)$.
Set $s_0=\vp^{\chi_{1,n}^{\vee}}(1\ n)$, where $\chi_{1,n}$ is the unique highest root.
The affine Weyl group $W_a$ is the subgroup of $\tW$ generated by $\tS=S\cup \{s_0\}$.
Then we can write the extended affine Weyl group as a semi-direct product $\tW\cong W_a\rtimes \Omega$, where $\Omega\subset \tW$ is the subgroup of length $0$ elements.
Moreover, $(W_a, S\cup \{s_0\})$ is a Coxeter system.
The restriction of the length function $\ell|_{W_a}$ is the length function on $W_a$ given by the fixed system of generators.
We denote by $\SW$ the set of minimal length elements for the cosets in $W_0\backslash \tW$ (cf.\ \cite[(2.4.5)]{Macdonald}).

Let $\Phi_a$ denote the set of affine roots of $T$ in $G$.
As usual we order the affine roots in such a way that the simple affine roots are the functions
$$\chi_{i,i+1}\colon \R^n\rightarrow \R,\quad (x_1,\ldots, x_n)\mapsto x_i-x_{i+1}\ (1\le i \le n-1)$$
together with the affine linear function
$$\chi_0\colon \R^n\rightarrow \R,\quad (x_1,\ldots, x_n)\mapsto x_n-x_1+1.$$
The extended affine Weyl group $\tW$ acts on $\Phi_a$.
For example, we have
$$\tau \chi_{i,i+1}=\chi_{\tau(i),\tau(i+1)},\quad\tau\in W_0$$
and
$$\vp^{\lambda}\chi_{i,i+1}=\chi_{i,i+1}-\langle\chi_{i,i+1}, \lambda\rangle \delta,\quad\lambda\in X_*(T),$$
where $\delta$ is the constant function with value $1$.
 
\section{Affine Deligne-Lusztig varieties}
\label{ADLV}
In this section we first recall the affine Grassmannian and the affine flag variety for $\GL_n$.
After that, we will recall the definition of affine Deligne-Lusztig varieties.

\subsection{The Affine Grassmannian}
Let $X$ (resp.\ $\mathcal X$) be a scheme over $F$ (resp.\ $\cO_F$).
For any perfect $\Fq$-algebra $R$, the functor $LX$ defined by $$LX(R)=X(\W(R)[\vp^{-1}])$$ is called the loop space of $X$.
The positive loop group of $\mathcal X$ is the functor defined by $$L^+\mathcal X(R)=\mathcal X(\W(R)).$$

Let $\cG$ be a smooth affine group scheme over $\cO_F$ with generic fiber $G$.
The {\it affine Grassmannian} of $\cG$ is the fpqc quotient $LG/L^+{\cG}$.
It has the following representability property.

\begin{theo}
\label{indscheme}
The fpqc-sheaf $LG/L^+{\cG}$ is represented by a strict ind-perfect scheme.
\end{theo}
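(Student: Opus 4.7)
The plan is to first handle the case $\cG = \GL_n$ over $\cO_F$ directly via the lattice interpretation, and then reduce the general case to it using a closed embedding $\cG \hookrightarrow \GL_n$ over $\cO_F$.

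For $\cG = \GL_n$, I would identify $LG/L^+\cG$ with the presheaf sending a perfect $\Fq$-algebra $R$ to the set of $\W(R)$-lattices in $\W(R)[\vp^{-1}]^n$. Exhaust this by the subfunctors $\AGr^{(m)}$ of lattices $\Lambda$ with $\vp^m \W^n \subset \Lambda \subset \vp^{-m}\W^n$. Such a $\Lambda$ is equivalent to a $\W/\vp^{2m}$-stable $R$-submodule of $(\vp^{-m}\W^n/\vp^m\W^n)^\p \cong \A^{2mn,\p}$ of the appropriate rank. The stability condition is closed inside the perfection of the classical Grassmannian of subspaces, so $\AGr^{(m)}$ is the perfection of a projective $\Fq$-scheme; the inclusions $\AGr^{(m)} \hookrightarrow \AGr^{(m+1)}$ are closed immersions, and their colimit is a strict ind-perfect scheme.

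For general $\cG$, pick a closed embedding $\cG \hookrightarrow \GL_n$ over $\cO_F$, which exists because $\cG$ and $\GL_n$ share the same generic fiber and $\cG$ is smooth affine. After pullback along an $L^+\GL_n$-torsor, the map $LG/L^+\cG \to LG/L^+\GL_n$ becomes a $(\GL_n/\cG)^\p$-bundle; since $\GL_n/\cG$ can be arranged to be quasi-affine, this map is representable by a locally closed immersion, and $LG/L^+\cG$ inherits an ind-perfect scheme structure by pullback of the filtration constructed in the previous paragraph.

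The main obstacle will be the mixed-characteristic setting: there $LG$ and $L^+\cG$ are not classical ind-schemes, and one needs the framework of perfect schemes developed by Zhu and by Bhatt-Scholze to make sense of the constructions above. Once those foundations (in particular representability of $L^+\cG$-torsors and perfections of projective $\Fq$-varieties) are in place, the lattice argument and the reduction via closed embedding carry through without essential change.
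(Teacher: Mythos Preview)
The paper's own proof is a one-line citation: \cite[Theorem 1.4]{PR} for equal characteristic and \cite[Corollary 9.6]{BS} for mixed characteristic. Your lattice argument for $\cG=\GL_n$ is essentially the argument carried out in those references and is a correct sketch (modulo care with the notion of ``lattice'' for general perfect $R$, where one needs finite projective submodules rather than submodules of a fixed $R$-rank, and the mixed-characteristic foundations you yourself flag).

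Your reduction step, however, is confused. First, a closed immersion $\cG\hookrightarrow\GL_n$ over $\cO_F$ that is an isomorphism on generic fibers forces $\cG=\GL_n$ by flatness, so no nontrivial such embedding exists; parahoric models of $\GL_n$ arise as dilatations, and dilatation maps are affine but not closed immersions. Second, even granting a map $\cG\to\GL_n$ of $\cO_F$-group schemes, since both have generic fiber $\GL_n$ the induced map $LG/L^+\cG \to LG/L^+\GL_n$ has fiber $L^+\GL_n/L^+\cG$, which for a parahoric $\cG$ is the perfection of a positive-dimensional partial flag variety; hence this map is a proper fibration, not a locally closed immersion. You are conflating this with the actual reduction in \cite{PR} and \cite{BS}, which embeds $\cG$ into some $\GL_N$ with a \emph{larger} generic fiber and uses quasi-affineness of $\GL_N/\cG$ over $\cO_F$ to obtain a locally closed immersion of affine Grassmannians. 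The easy fix in the present setting, once the hyperspecial case is done, is to observe that $LG/L^+\cG \to LG/L^+\GL_n$ is representable and proper with fibers the perfection of a projective $\Fq$-variety, so pulling back the closed filtration on the target gives a strict ind-perfect scheme structure on the source.
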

\begin{proof}
This is true for any connected reductive group over $F$.
In the equal characteristic case, see \cite[Theorem 1.4]{PR}.
In the mixed characteristic case, see \cite[Corollary 9.6]{BS}.
\end{proof}

If $\cG(\cO)=K$ (resp.\ $\cG(\cO)=I$), then we call $LG/L^+{\cG}$ the {\it affine Grassmannian} (resp.\ {\it affine flag variety}) for $G$, and denote it by $\cG rass$ (resp.\ $\cF lag$).
Let $\pi$ denote the projection $\cF lag\rightarrow \cG rass$.

The affine Grassmannian for $G$ can be interpreted as parameter spaces of lattices satisfying certain conditions.
Explicitly, for any perfect $\Fq$-algebra $R$, $\cG rass(R)$ can be seen as the set of finite projective $\W(R)$-submodules $\sL$ of $\W(R)[\vp^{-1}]^n$ such that $\sL\otimes_{\W(R)}\W(R)[\vp^{-1}]=\W(R)[\vp^{-1}]^n$.
For any $\sL\in\cG rass(R)$, we define its {\it dual lattice} $\sL^*$ by $\Hom_{\W(R)}(\sL, \W(R))$, which is also a finite projective $\W(R)$-module.
Further, we have $$\sL^*\subset \sL^*\otimes_{\W(R)}\W(R)[\vp^{-1}]=\Hom_{\W(R)[\vp^{-1}]}(\W(R)[\vp^{-1}]^n, \W(R)[\vp^{-1}]).$$
Let us denote the $\W(R)[\vp^{-1}]$-module $\Hom_{\W(R)[\vp^{-1}]}(\W(R)[\vp^{-1}]^n, \W(R)[\vp^{-1}])$ by $(\W(R)[\vp^{-1}]^n)^*$.
Let $e_1,\ldots, e_n\in \W(R)[\vp^{-1}]^n$ be the canonical basis.
Let $e_1^*,\ldots, e_n^*\in (\W(R)[\vp^{-1}]^n)^*$ be its dual basis, i.e., the $\W(R)[\vp^{-1}]$-linear maps $\W(R)[\vp^{-1}]^n\rightarrow \W(R)[\vp^{-1}]$
defined as
\[
  e_i^*(e_j)=
  \begin{cases}
    1 & (i=j) \\
    0 & (i\neq j). \\
  \end{cases}
\]
Then we have an isomorphism $\W(R)[\vp^{-1}]^n\cong (\W(R)[\vp^{-1}]^n)^*$ defined by sending $e_i$ to $e_i^*$.
Under this isomorphism, we may consider $\sL^*$ as an element of $\cG rass(R)$.
\begin{prop}
\label{dual}
The morphism $*\colon \cG rass\rightarrow \cG rass$ defined by $\sL\mapsto \sL^*$ as above is an automorphism.
Moreover, the image of $g\cO^n\in \cG rass(\bar k)$ is ${^tg}^{-1}\cO^n\in \cG rass(\bar k)$.
\end{prop}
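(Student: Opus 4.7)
The plan is to identify $*$ with the automorphism of $\cG rass = LG/L^+\cG$ induced by the inverse-transpose automorphism $\tau\colon g\mapsto {}^tg^{-1}$ of $\GL_n$, and to verify the explicit formula by a direct matrix computation. First I would carry out the computation for the standard lattice case. Given $\sL = g\cO^n$ with $g\in \GL_n(L)$, an element of $\sL^* = \Hom_\cO(g\cO^n,\cO)$ extends uniquely to an $L$-linear functional $\tilde\phi\colon L^n\to L$, which I can write as $\tilde\phi = \sum_i a_i e_i^*$. The condition $\tilde\phi(ge_j)\in \cO$ for every $j$ reads $\sum_i g_{ij}a_i\in \cO$, i.e.\ ${}^tg\cdot (a_1,\ldots,a_n)^t \in \cO^n$, equivalently $(a_1,\ldots,a_n)^t \in {}^tg^{-1}\cO^n$. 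Under the identification $e_i^* \leftrightarrow e_i$ this yields $\sL^* = {}^tg^{-1}\cO^n$, proving the second assertion.

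Next I would observe that $\tau$ is an algebraic group automorphism of $\GL_n$ defined over $\cO_F$, so it induces automorphisms of the loop group $LG$ and of $L^+\cG$, and hence an involutive automorphism $\bar\tau$ of the quotient $\cG rass$ as an ind-scheme. By the previous step, $\bar\tau$ sends $g\cO^n \mapsto {}^tg^{-1}\cO^n$, matching the pointwise dualization $*$ on $\bar k$-points. To upgrade this to an equality of morphisms of ind-schemes, I would use that every $\sL \in \cG rass(R)$ is, fpqc-locally on $\mathrm{Spec}\,R$, of the form $g\W(R)^n$ for some $g \in \GL_n(\W(R)[\vp^{-1}])$, and that the formation of duals of finite projective modules commutes with base change; the same matrix computation then applies in the relative setting and shows $\bar\tau = *$. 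Since $\tau\circ\tau = \id$, the map $*$ is its own inverse, hence an automorphism.

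The only real subtlety is this last step: the pointwise assignment $\sL \mapsto \sL^*$ must be shown to be a morphism of ind-schemes, not merely a bijection of sets. This is resolved by the fpqc-local triviality of lattices in $\cG rass$ (which lets one reduce to the standard lattice) combined with functoriality of $\Hom$ for finite projective modules; once this is in place, the automorphism property is essentially a formal consequence of the fact that transpose-inverse is an involution on $\GL_n$.
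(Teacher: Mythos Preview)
Your proposal is correct and follows essentially the same route as the paper: the paper also deduces the automorphism property from $*^2=\id$ and establishes the formula $(g\cO^n)^* = {}^tg^{-1}\cO^n$ via duality, though it does the latter by observing that the dual of $\varphi_g\colon x\mapsto gx$ is represented by ${}^tg$ in the dual basis and that $\varphi_g^*((g\cO^n)^*)=(\cO^n)^*$, rather than by your explicit coordinate computation. You are more careful than the paper in justifying that $\sL\mapsto\sL^*$ is actually a morphism of ind-schemes (by identifying it with the map induced by the $\cO_F$-defined automorphism $g\mapsto {}^tg^{-1}$ of $\GL_n$); the paper simply asserts ``clear because $*^2=\id$'' and leaves this point implicit.
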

\begin{proof}
The first assertion is clear because $*^2=\id$.
For the last assertion, let $g\in G(L)$.
Then $g$ defines an automorphism $\varphi_g\colon L^n\rightarrow L^n,\  x\mapsto gx$.
Its dual $\varphi_g^*$ is represented by $^tg$ relative to the basis $e_1^*,\ldots, e_n^*$.
The assertion follows from the equation $\varphi_g^*((g\cO^n)^*)=(\cO^n)^*$.
\end{proof}

\subsection{Affine Deligne-Lusztig Varieties}
The affine Deligne-Lusztig variety is defined as follows.

\begin{defi}
The affine Deligne-Lusztig variety $X_{\lambda}(b)$ in the affine Grassmannian associated with $b\in G(L)$ and $\lambda\in X_*(T)_+$ is given by
$$X_{\lambda}(b)=\{gK\in G(L)/K\mid g^{-1}b\sigma(g)\in K\vp^{\lambda}K\}\subset \cG rass.$$
The affine Deligne-Lusztig variety $X_w(b)$ in the affine flag variety associated with $b\in G(L)$ and $w\in \tW$ is given by
$$X_w(b)=\{gI\in G(L)/I\mid g^{-1}b\sigma(g)\in IwI\}\subset \cF lag.$$
\end{defi}
Let us denote by $J_b$ the $\sigma$-centralizer of $b$.
Then the varieties $X_{\lambda}(b)$ and $X_w(b)$ are equipped with an action of $J_b(F)$.

Let $B(G)$ be the set of $\sigma$-conjugacy classes of elements in $G(L)$.
For a dominant cocharacter $\lambda$, we define a subset $B(G, \lambda)$ of $B(G)$ as the set of $b\in B(G)$ satisfying $\nu_b\preceq \lambda$, where $\nu_b$ is the Newton vector of $b$ and $\preceq$ denotes the dominance order on $X_*(T)_+$.
Inside $B(G,\lambda)$, there is always a unique basic element.
Moreover, we have the following criterion for non-emptiness of $X_{\lambda}(b)$:
\begin{theo}
The variety $X_{\lambda}(b)$ is non-empty if and only if the $\sigma$-conjugacy class of $b$ is contained in $B(G, \lambda)$.
\end{theo}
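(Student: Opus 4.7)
The plan is to prove the two implications separately. The ``only if'' direction is an instance of Mazur's inequality, while the ``if'' direction is the Kottwitz--Rapoport conjecture, established for split (in particular unramified) groups by Gashi.

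For necessity, suppose $gK \in X_{\lambda}(b)$ and set $b' = g^{-1}b\sigma(g) \in K\vp^{\lambda}K$. Then $\nu_{b} = \nu_{b'}$, since $b$ and $b'$ are $\sigma$-conjugate. Viewing $b'$ as a $\sigma$-linear endomorphism of $\cO^n$, its elementary divisors are $\vp^{\lambda_1},\ldots,\vp^{\lambda_n}$ (the Hodge polygon), and the classical Dieudonn\'{e}-module comparison of the Hodge and Newton polygons yields $\nu_{b'} \preceq \lambda$. For $\GL_n$ the dominance order already encodes the equality of determinant valuations, which is the Kottwitz invariant, so this gives $[b] \in B(G,\lambda)$.

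For sufficiency, I would invoke Gashi's theorem directly: it produces $g \in G(L)$ with $g^{-1}b\sigma(g) \in K\vp^{\lambda}K$ whenever $[b] \in B(G, \lambda)$. A more hands-on path for $\GL_n$ proceeds in two steps: first, reduce to the unique basic class in $B(G,\lambda)$ using a Hodge--Newton decomposition together with the fact that the locus of $[b]$ with $X_{\lambda}(b) \neq \emptyset$ is stable under the natural degeneration order on $B(G,\lambda)$; second, for the basic $b$, construct an explicit witness lattice by decomposing the underlying $F$-isocrystal into its isoclinic components and arranging a standard basis so as to realize relative position exactly $\vp^{\lambda}$. An alternative route, well aligned with the Iwahori-level perspective used later in the paper, is to produce some $w \in \Adm(\lambda)$ with $X_w(b) \neq \emptyset$ via He's non-emptiness results in the affine flag variety, and then push forward along $\pi\colon \cF lag \to \cG rass$. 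The main obstacle is the sufficiency direction; necessity is essentially formal once Mazur's inequality is granted.
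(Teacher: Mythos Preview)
Your proposal is correct and follows essentially the same approach as the paper: the paper's proof simply cites \cite{RR} (for the necessity direction, i.e.\ Mazur's inequality) and \cite{Gashi} (for the sufficiency direction, i.e.\ the Kottwitz--Rapoport conjecture), and you have spelled out exactly what these two references contribute. Your additional sketches of alternative routes to sufficiency go beyond what the paper records, but the core argument is the same.
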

\begin{proof}
This is a combination of the results in \cite{RR} and \cite{Gashi}.
\end{proof}

For $X_w(b)$ in the affine flag variety, a complete answer to the non-emptiness question is not known.
However, if $b$ is basic, we have a criterion using the notion of ``$P$-alcove''.
To explain this, we need some notation.
Let $\ba$ be the base alcove corresponding to $I$ (as in the sense of \cite[1.2]{GHKR2}).
Let $S'\subset S$ and $w_0\in W_0$.
Let $U_{ij}$ be the root subgroup for $\chi_{ij}\in \Phi$.
We denote by $P_{S'}$ the standard parabolic subgroup corresponding to $S'$.
For any $x\in \tW$, we say $x\ba$ is a $^{w_0}P_{S'}$-alcove, if
\begin{enumerate}[(i)]
\item $w_0^{-1}x w_0\in \widetilde W_{S'}\coloneqq X_*(T)\rtimes W_{S'}$, and
\item For any $\chi_{ij}\in w_0(\Phi_+\setminus \Phi_{S'})$, $U_{ij}\cap {^xI}\subseteq U_{ij}\cap I$,
\end{enumerate}
where $W_{S'}\subset W_0$ is the subgroup generated by $S'$, and $\Phi_{S'}$ is the set of roots spanned by the simple roots corresponding to $S'$.
Finally, let $M_{S'}$ be the Levi component of $P_{S'}$, and let $\kappa_{M_{S'}}$ be the corresponding Kottwitz map.
\begin{theo}
\label{Palcove}
Let $b\in G(L)$ be a basic element, and let $x\in \tW$.
Then $X_x(b)\neq \emptyset$ if and only if, for every $(S', w_0)$ for which $x\ba$ is a $^{w_0}P_{S'}$-alcove, $b$ is $\sigma$-conjugate to an element $b'\in M_{S'}(L)$ and $x$ and $b'$ have the same image under $\kappa_{M_{S'}}$.
\end{theo}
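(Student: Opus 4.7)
The plan is to prove the two implications separately. The ``only if'' direction proceeds by a structural analysis of $IxI$ under the alcove hypothesis, allowing us to $\sigma$-conjugate $b$ into $M_{S'}(L)$, while the ``if'' direction reduces non-emptiness to a smaller problem inside the Levi $M_{S'}$ and is handled by induction on the semisimple rank of the ambient group.

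For the ``only if'' direction, suppose $g \in G(L)$ satisfies $g^{-1}b\sigma(g) \in IxI$, and fix a pair $(S', w_0)$ as in the statement. After replacing the data by $\dot{w}_0^{-1}$-conjugates we may assume $w_0 = \id$, so $x \in \tW_{S'}$ and condition (ii) reads $U_{ij} \cap {^xI} \subseteq U_{ij} \cap I$ for every $\chi_{ij} \in \Phi_+\setminus\Phi_{S'}$. Use the Iwahori factorization $I = I^{-}\cdot(I\cap M_{S'})\cdot I^{+}$, where $I^{\pm}$ denote the intersections of $I$ with the unipotent radicals of $P_{S'}$ and its opposite. Condition (ii) makes $\Ad(x)\circ\sigma$ strictly contracting on $I^{+}$, which allows us to $\sigma$-conjugate $g$ iteratively by elements of $I^{+}$ (and symmetrically by $I^{-}$) so that, in the $\vp$-adic limit, $g^{-1}b\sigma(g)$ lies in $(I\cap M_{S'})\, x\,(I\cap M_{S'})\subset M_{S'}(L)$. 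This produces a $b'\in M_{S'}(L)$ which is $\sigma$-conjugate to $b$; since $b' \in (I\cap M_{S'})\,x\,(I\cap M_{S'})$, one immediately reads off $\kappa_{M_{S'}}(b') = \kappa_{M_{S'}}(x)$.

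For the ``if'' direction we may again assume $w_0 = \id$ and $b \in M_{S'}(L)$ with $\kappa_{M_{S'}}(b) = \kappa_{M_{S'}}(x)$. Apply the same criterion to the group $M_{S'}$, whose semisimple rank is strictly less than that of $G$ when $S' \neq S$ (the case $S' = S$ being trivial, as then $x\ba$ is forced to match $b$ by the Kottwitz condition alone). By induction on semisimple rank, there exists $g \in M_{S'}(L)$ with $g^{-1}b\sigma(g) \in (I\cap M_{S'})\,x\,(I\cap M_{S'}) \subset IxI$, whence $X_x(b)\neq\emptyset$.

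The main obstacle is justifying the iterative $\sigma$-conjugation in the ``only if'' direction: one must check that the sequence of $I^{\pm}$-elements used to absorb the unipotent contributions defines a convergent infinite product in the $\vp$-adic topology. Strict contraction of $\Ad(x)\circ\sigma$ on $I^{+}$ (which is precisely what condition (ii) encodes in affine-root language) supplies the necessary convergence, but matching this with the extended Iwahori structure and simultaneously handling both sides of the Iwahori factorization is the technical heart of the $P$-alcove formalism, and is where the basic hypothesis on $b$ enters: it ensures the absence of obstructions coming from non-trivial Newton strata inside $M_{S'}$ that could otherwise block the reduction.
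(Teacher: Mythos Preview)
The paper does not actually prove this theorem; its entire proof reads ``This is conjectured in \cite{GHKR2}, and proved in \cite{GHN2}.'' So there is no in-paper argument to compare against, only the cited literature.

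Your ``only if'' sketch is broadly in the spirit of \cite{GHKR2}: the $P$-alcove condition (ii) is precisely what makes $\Ad(x)\circ\sigma$ contracting on $I^+$, and the iterated $\sigma$-conjugation into the Levi is the content of their Hodge--Newton-style decomposition. That direction was indeed already available before \cite{GHN2}.

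Your ``if'' direction, however, has a genuine gap. The induction on semisimple rank only moves forward when there is a \emph{proper} pair $(S',w_0)$ with $S'\subsetneq S$ for which $x\ba$ is a $^{w_0}P_{S'}$-alcove; only then can you descend to a smaller Levi. Your parenthetical ``the case $S'=S$ being trivial, as then $x\ba$ is forced to match $b$ by the Kottwitz condition alone'' is exactly where the argument fails. If the \emph{only} pair satisfying the alcove condition is the trivial one $(S,\id)$, then the hypothesis collapses to $\kappa_G(b)=\kappa_G(x)$, and you must still prove $X_x(b)\neq\emptyset$ from that alone. This is not trivial; it is the main content of \cite{GHN2}, which handles it via the theory of minimal-length elements in $\sigma$-conjugacy classes of $\tW$ (He--Nie) and a reduction to straight elements, not by any Kottwitz bookkeeping. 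Moreover, even when a proper $(S',w_0)$ does exist, you have not checked that the inductive hypothesis (basicness of the transferred element $b'$ in $M_{S'}$, and the ``every pair'' condition inside $M_{S'}$) is actually inherited; your closing remark about ``non-trivial Newton strata inside $M_{S'}$'' gestures at this issue but does not resolve it.
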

\begin{proof}
This is conjectured in \cite{GHKR2}, and proved in \cite{GHN2}.
\end{proof}

Let $p_1$ be the projection $\tW=W_0\ltimes X_*(T)\rightarrow W_0$.
We define $p_2\colon \tW \rightarrow W_0$ as follows:
The image $p_2(w)$ is the unique element in $W_0$ such that $p_2(w)^{-1}w\ba$ is contained in the dominant chamber.
We also have a more explicit criterion for the emptiness of $X_w(b)$.
\begin{prop}
\label{emptiness}
Let $b\in G(L)$ be a basic element.
Let $w\in \tW$, and write $w=\vp^{\lambda}w_0$ with $\lambda\in X_*(T),\ w_0\in W_0$.
Assume that $\lambda\neq \nu_b$ and $p_2(w)^{-1}p_1(w)p_2(w)\in \bigcup_{S'\subsetneq S}W_{S'}$,
where $\nu_b$ is the newton vector of $b$ and $W_{S'}\subset W_0$ is the subgroup generated by $S'$.
Then $X_w(b)=\emptyset$.
\end{prop}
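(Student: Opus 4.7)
My plan is to apply Theorem \ref{Palcove} with the Levi dictated by the hypothesis and derive a contradiction from $\lambda\neq\nu_b$.

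Set $y:=p_2(w)$ and choose $S'\subsetneq S$ (minimal) with $y^{-1}p_1(w)y\in W_{S'}$; such $S'$ exists by hypothesis. I would first check that $w\ba$ is a ${}^{y}P_{S'}$-alcove. Condition (i) is immediate: $y^{-1}wy=\vp^{y^{-1}\lambda}(y^{-1}p_1(w)y)\in X_*(T)\rtimes W_{S'}=\tW_{S'}$. For condition (ii), the inclusion $U_{ij}\cap{}^{w}I\subseteq U_{ij}\cap I$ for each $\chi_{ij}\in y(\Phi_+\setminus\Phi_{S'})$ follows from the defining property of $p_2$: since $y^{-1}w\ba$ lies in the dominant chamber, $w\ba$ sits on the positive side of every affine root hyperplane whose gradient lies in $y\Phi_+$, and the resulting comparison of root-group levels yields the claim.

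Now suppose for contradiction that $X_w(b)\neq\emptyset$. Theorem \ref{Palcove} supplies $b'\in M_{S'}(L)$ that is $\sigma$-conjugate to $b$ in $G(L)$ and satisfies $\kappa_{M_{S'}}(y^{-1}wy)=\kappa_{M_{S'}}(b')$. Since $b$ is basic, $\nu_b=(\nu,\dots,\nu)$ is central in $G$, hence central in each Levi factor, so $b'$ is basic in $M_{S'}=\prod_i\GL_{m_i}$ with the same central Newton vector. Consequently $\kappa_{M_{S'}}(b')=(m_i\nu)_i$. On the other hand, $\kappa_{M_{S'}}(y^{-1}wy)$ equals the image of $y^{-1}\lambda$ in $X_*(Z(M_{S'}))\cong\Z^k$, namely $\bigl(\sum_{j\in B_i}(y^{-1}\lambda)_j\bigr)_i$, where $B_i\subset\{1,\dots,n\}$ denotes the $i$-th block of the associated partition. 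Matching yields the averaging condition $\sum_{j\in B_i}(y^{-1}\lambda)_j=m_i\nu$ for every $i$.

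The concluding step is the observation that $y^{-1}\lambda$ must be weakly dominant, which follows from $y=p_2(w)$ together with integrality of $\lambda$: any strict inversion $(y^{-1}\lambda)_i<(y^{-1}\lambda)_{i+1}$ among integer entries would push $y^{-1}w\ba$ out of the closed dominant chamber. Since each block $B_i$ is an interval of consecutive integers, weak dominance combined with the block-average conditions is extremely restrictive: the last entry of $B_i$ is at most $\nu$ and the first entry of $B_{i+1}$ is at least $\nu$, yet they appear in decreasing order, forcing both to equal $\nu$; iterating inside each block then forces every coordinate of $y^{-1}\lambda$ to equal $\nu$. Hence $\lambda=y\cdot\nu_b=\nu_b$, contradicting the hypothesis.

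I expect the main obstacle to be verifying condition (ii) of the $P$-alcove structure together with the weak dominance of $y^{-1}\lambda$; both are essentially elementary bookkeeping about the base alcove, the translation $\lambda$, and the lower-triangular Iwahori convention, but require care with signs and with the placement of the alcove relative to root hyperplanes passing through non-integer levels.
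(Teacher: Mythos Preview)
The paper does not actually prove this proposition; its ``proof'' is the single sentence citing \cite[Proposition~9.5.4]{GHKR2} (with a pointer to \cite[Corollary~11.3.5]{GHKR2}). Your plan—exhibit $w\ba$ as a ${}^{y}P_{S'}$-alcove for $y=p_2(w)$ and then violate the Kottwitz compatibility in Theorem~\ref{Palcove}—is precisely the mechanism by which \cite{GHKR2} obtains that proposition, so you are reconstructing the argument behind the citation rather than proposing a different route.

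Your outline is correct, including the two steps you flagged as delicate. For the weak dominance of $y^{-1}\lambda$: with the paper's conventions the base alcove $\ba$ lies in the dominant chamber, so for every $u\in W_0$ and every simple root $\alpha_i$ one has $\langle\alpha_i,u\ba\rangle=\langle u^{-1}\alpha_i,\ba\rangle\subset(-1,1)$; thus $\langle\alpha_i,y^{-1}w\ba\rangle>0$ forces $\langle\alpha_i,y^{-1}\lambda\rangle>-1$, hence $\geq 0$ by integrality. The same estimate, applied to all roots in $\Phi_+\setminus\Phi_{S'}$, yields condition~(ii) of the $P$-alcove definition. Your averaging argument at the end is then fine as written. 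One notational caveat: the phrase ``$x$ and $b'$ have the same image under $\kappa_{M_{S'}}$'' in Theorem~\ref{Palcove} should be read, as you implicitly do, as comparing $b'$ with $y^{-1}wy\in\tW_{S'}$ (equivalently, comparing $w$ with an element of ${}^{y}M_{S'}$); it is worth saying this explicitly since the literal reading would give block sums of $\lambda$ rather than of $y^{-1}\lambda$.
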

\begin{proof}
This is \cite[Proposition 9.5.4]{GHKR2} (see also \cite[Corollary 11.3.5]{GHKR2}).
\end{proof}

\subsection{Deligne-Lusztig Reduction Method}
For affine Deligne-Lusztig varieties $X_w(b)\subset \cF lag$, we have the following reduction method developed in \cite[2.5]{GH3} (see also \cite[Proposition 3.3.1]{HZZ}).
\begin{prop}
\label{reduction}
Let $w\in \tW$, $s\in \tS$ and $b\in G(L)$.
\begin{enumerate}[(i)]
\item Let $\eta\in \Omega$. Then there exists a $J_b(F)$-equivariant isomorphism
$$X_w(b)\xrightarrow{\sim} X_{\eta w \eta^{-1}}(b).$$
\item If $\ell(sws)=\ell(w)$, then there exists a $J_b(F)$-equivariant isomorphism
$$X_w(b)\xrightarrow{\sim} X_{sws}(b).$$
\item If $\ell(sws)=\ell(w)-2$, then $X_w(b)$ has a $J_b(F)$-stable closed subscheme $X_1$ satisfying the following conditions:
\begin{enumerate}
\item There exists a $J_b(F)$-equivariant morphism $$X_1\rightarrow X_{sws}(b)$$
which is a Zariski-locally trivial $\A^{1,\p}$-bundle.
\item Let $X_2$ be the $J_b(F)$-stable open subscheme of $X_w(b)$ complement to $X_1$.
Then there exists a $J_b(F)$-equivariant morphism $$X_2\rightarrow X_{sw}(b)$$
which is a Zariski-locally trivial $\G_m^\p$-bundle.
\end{enumerate}
\end{enumerate}
\end{prop}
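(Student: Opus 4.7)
The plan is to handle the three parts of the proposition separately.

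Part (i) is immediate. Since $\ell(\eta)=0$, any lift $\dot\eta \in N_{G(L)}T(L)$ of $\eta$ normalizes $I$, and may be chosen $\sigma$-fixed (as $\eta$ has a representative defined over $F$). The right multiplication $gI \mapsto g\dot\eta^{-1}I$ is then a $J_b(F)$-equivariant automorphism of $\cF lag$, and a direct computation gives
\[
(g\dot\eta^{-1})^{-1} b \sigma(g\dot\eta^{-1}) = \dot\eta \cdot g^{-1} b \sigma(g) \cdot \dot\eta^{-1} \in \dot\eta (IwI) \dot\eta^{-1} = I(\eta w \eta^{-1}) I,
\]
so it restricts to the desired isomorphism $X_w(b) \xrightarrow{\sim} X_{\eta w \eta^{-1}}(b)$.

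For parts (ii) and (iii), I would use the Deligne-Lusztig correspondence attached to the parahoric $P_s := I \sqcup IsI$. The projection $\pi_s : \cF lag \to G(L)/P_s$ is a Zariski-locally trivial $\PS^{1,\p}$-bundle, and the key object is the $J_b(F)$-equivariant correspondence
\[
Y_w := \{(g_1 I, g_2 I) \in \cF lag \times_{G(L)/P_s} \cF lag : g_1^{-1} b \sigma(g_2) \in IwI\},
\]
together with its two projections $p_1, p_2 : Y_w \to \cF lag$. Stratify $Y_w = Y_w^e \sqcup Y_w^s$ by the relative position of $g_1$ and $g_2$: the diagonal piece $Y_w^e$ identifies with $X_w(b)$, while on $Y_w^s$ one parametrizes $g_2 = g_1 \dot s u$ with $u$ ranging over the root subgroup $U_s \cong \A^{1,\p}$. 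The images of $p_1$ and $p_2$ are then read off from the Iwahori-Hecke cell products $IwI \cdot IsI$ and $IsI \cdot IwI$, which equal $IwsI$ or $IswI$ respectively when the length goes up, and split into two cells when it goes down. In case (ii), up to the symmetry $p_1 \leftrightarrow p_2$ the length condition $\ell(sws)=\ell(w)$ forces $sw > w$ and $ws < w$, and $Y_w^s$ projects isomorphically under $p_1$ onto $X_w(b)$ and under $p_2$ onto $X_{sws}(b)$, giving the claimed isomorphism. In case (iii) both $sw < w$ and $ws < w$, and the $\A^{1,\p}$-parameter $u$ over a point of $X_w(b)$ splits: a $\Gm^\p$-locus where $p_2$ lands in $X_{sw}(b)$ (defining the open $X_2$) and a single closed point where $p_2$ lands in $X_{sws}(b)$ (assembling into the closed $X_1$), with Zariski-local triviality of both bundles provided by the explicit root subgroup parametrization.

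The main obstacle will be carrying out this case analysis precisely enough to identify which piece of each $\PS^{1,\p}$-fiber lands in which Bruhat cell of $IwI \cdot IsI$, and to verify Zariski-local triviality rather than merely the set-theoretic fiber structure in (iii). Both points reduce to careful bookkeeping with the unipotent decomposition $IsI \cong U_s \dot s I$ together with the $\sigma$-action; the entire framework is otherwise standard, which is why the author simply refers to \cite{GH3, HZZ}.
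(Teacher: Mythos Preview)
The paper does not give its own proof: it simply cites \cite[2.5]{GH3} (with the remark that $\sigma$ is an isomorphism). Your part (i) and the overall framework for (ii)--(iii) via the rank-one parahoric $P_s$ and the cell products $IwI\cdot IsI$, $IsI\cdot IwI$ are exactly the standard ingredients from that reference, so in spirit you are reproducing the cited argument.

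That said, your bookkeeping in case (iii) is not right, and the confusion is structural rather than cosmetic. You describe an $\A^{1,\p}$-parameter $u$ sitting \emph{over} a point of $X_w(b)$, splitting into a $\Gm^\p$-locus (giving $X_2$) and a closed point (giving $X_1$). But $X_1$ and $X_2$ are subsets of $X_w(b)$ itself, and the $\A^{1,\p}$ and $\Gm^\p$ in the statement are fibers \emph{over} $X_{sws}(b)$ and $X_{sw}(b)$, not over $X_w(b)$. With your correspondence $Y_w$, the fiber of $p_1|_{Y_w^s}$ over $g_1\in X_w(b)$ is already only a $\Gm$ (one point of the $s$-line is lost to the cell $IwsI$ since $ws<w$), and $p_2$ of that fiber lands in $X_w(b)\cup X_{sw}(b)$, never in $X_{sws}(b)$; so $X_{sws}(b)$ does not appear where you claim.

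The correct picture, which the paper spells out immediately after the proposition, runs the other way. For $gI\in X_w(b)$ one uses $sw<w$ to produce a \emph{unique} $C_g$ with $\inv(gI,C_g)=s$ and $\inv(C_g,b\sigma(g)I)=sw$; the dichotomy is then whether $\inv(C_g,b\sigma(C_g))$ equals $sws$ or $sw$, and this is what partitions $X_w(b)$ into $X_1$ and $X_2$. The map $g\mapsto C_g$ is the bundle map in both cases, and one checks that its fibers over $X_{sws}(b)$ and $X_{sw}(b)$ are $\A^{1,\p}$ and $\Gm^\p$ respectively by running the cell products in the direction $IsI\cdot IswsI$ and $IsI\cdot IswI$. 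Rewriting your sketch along these lines (or equivalently, replacing your $Y_w$ by the correspondence with defining condition $g_2^{-1}b\sigma(g_1)\in IswI$) would fix the argument.
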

\begin{proof}
All of the statements are proved in \cite[2.5]{GH3} (note that in our case, the Frobenius endmorphism $\sigma$ is an isomorphism).
\end{proof}

Here we will give the set-theoretical description of the morphisms in Proposition \ref{reduction}.
First of all, the isomorphism in (i) is given as the map 
$$gI\in X_w(b)\mapsto g\eta^{-1}I \in X_{\eta w \eta^{-1}}(b).$$
Let us denote by $\inv$ the relative position map on $\cF lag$.
If $\ell(sw)<\ell(w)$, let $C_g$ be the unique element in $\cF lag$ such that $\inv(gI, C_g)=s$ and $\inv(C_g, b\sigma(g)I)=sw$.
Using this notation, we describe the isomorphism in (ii) as the map
$$gI\in X_w(b)\mapsto C_g \in X_{sws}(b).$$
Note that in this case we always have $\inv(C_g, b\sigma (C_g))=sws$ because $\ell(sws)>\ell(sw)$.
If $\ell(sw)>\ell(w)$, then by exchanging $w$ and $sws$, we can reduce to the case $\ell(sw)<\ell(w)$.
Finally, let us explain (iii).
The set $X_1$ (resp.\ $X_2$) consists of the elements $gI\in X_w(b)$ satisfying $\inv(C_g, b\sigma(C_g))=sws$ (resp.\ $\inv(C_g, b\sigma(C_g))=sw$),
and both of the maps in (iii) are given as the map sending $gI$ to $C_g$.

\section{A Simple Condition on $\lambda$}
\label{condition}
In this section, we consider a simple condition on $\lambda\in X_*(T)_+$.
In Section \ref{structure}, we will show that if $\lambda$ satisfies this condition, then $X_{\lambda}(b)$ has a simple geometric structure.

\subsection{A decomposition of $X_{\lambda}(b)$}
\label{Coxetertype}
For any $\lambda\in X_*(T)$, the $\lambda$-admissible set $\Adm(\lambda)$ is defined as 
$$\Adm(\lambda)=\{w\in \tW\mid \text{$w\le \vp^{\lambda'}$ for some $\lambda'\in W_0\lambda$}\},$$
where $\le$ denotes the Bruhat order on $\tW$.
Set $\SAdm(\lambda)^\circ=\Adm(\lambda)\cap \SW \cap W_0\vp^{\lambda}W_0(=\SW \cap W_0\vp^{\lambda}W_0)$.
Then, by \cite[Theorem 3.2.1]{GH} (see also \cite[2.4]{GHR}), we have a decomposition
$$X_{\lambda}(b)=\bigsqcup_{w\in \SAdm(\lambda)^\circ}\pi(X_w(b))$$
for any $b\in G(L)$.
Further, let $\SAdm(\lambda)^\circ_{\cox}$ be the subset of $\SAdm(\lambda)^\circ$ defined as
$$\{w\in \SAdm(\lambda)^\circ\mid \text{$p_1(w)$ is a (twisted) Coxeter element in $W_0$}\}.$$
\begin{defi}
We say an element $\lambda\in X_*(T)_+$ is {\it \CC} if 
$$X_{\lambda}(b)=\bigsqcup_{w\in \SAdm(\lambda)^\circ_{\cox}}\pi(X_w(b)),$$
where $b$ is an element in $G(L)$ whose $\sigma$-conjugacy class is the unique basic element in $B(G, \lambda)$.
\end{defi}

\begin{rema}
Clearly, this definition can be applied to general $G$.
\end{rema}

Since affine Deligne-Lusztig varieties are isomorphic if the element $b$ is replaced by another element $b'$ in the same $\sigma$-conjugacy class, this definition does not depend on the choice of $b$.
Further, let $m\in \Z$, and let $\lambda_m=(m,\ldots, m), c_m=\vp^{\lambda_m}$.
Then $X_{\lambda}(b)$ (resp.\ $X_w(b)$) is equal to $X_{\lambda+\lambda_m}(c_m b)$ (resp.\ $X_{c_m w}(c_m b)$) as a subset of the affine Grassmannian (resp.\ the affine flag variety).
Thus, to study $X_{\lambda}(b)$, we may replace $\lambda$ by another element $\lambda'$ in $\lambda_{\ad}$, where $\lambda_{\ad}$ denotes the image of $\lambda$ in the quotient $X_*(T)/\Z\lambda_1$. 
By abuse of notation, we will write $\lambda_{\ad}=(m_1,\ldots, m_n)$ if $\lambda=(m_1,\ldots, m_n)$.

\subsection{Classification}
In this subsection, we determine $\lambda$ \CC.
For this purpose, the following observation is essential:
\begin{lemm}
\label{coxlemm}
Let $\lambda=(m_1,\ldots, m_n)\in X_*(T)_+$.
Let $w_0$ be a permutation such that $w_0(k)>w_0(l)$ implies $m_{w_0(k)}<m_{w_0(l)}$ (equivalently $m_{w_0(k)}\neq m_{w_0(l)}$) for any $k<l$.
Then $w_0\vp^{w_0^{-1}(\lambda)}\in \tW$ belongs to $\SAdm(\lambda)^\circ$.
\end{lemm}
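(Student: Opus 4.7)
The plan is to verify directly the two conditions defining $\SAdm(\lambda)^\circ$, using the equality $\SAdm(\lambda)^\circ = \SW \cap W_0\vp^{\lambda}W_0$ recorded in the paper. The first condition, that $w_0\vp^{w_0^{-1}(\lambda)}$ lies in $W_0\vp^{\lambda}W_0$, is immediate from the semidirect product structure $\tW = W_0 \ltimes X_*(T)$: one simply rewrites $w_0\vp^{w_0^{-1}(\lambda)} = \vp^{\lambda}w_0$, which visibly belongs to $\vp^{\lambda}W_0$.

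The substantive content of the lemma is the claim that $w_0\vp^{w_0^{-1}(\lambda)}$ is the minimum length representative of its coset in $W_0\backslash \tW$, i.e., belongs to $\SW$. I would establish this by showing that left multiplication by any simple reflection $s_i \in S$ strictly increases length, using the explicit formula for $\ell(u\vp^{\mu})$ recalled in Section~\ref{notation}. Fixing $\mu = w_0^{-1}(\lambda)$, the two sums in the formula depend on the finite part $u$ only through the sign of $u\alpha$ for $\alpha \in \Phi_+$, so the passage from $u = w_0$ to $u = s_iw_0$ affects exactly the summand indexed by the unique positive root $\alpha \in \{\pm w_0^{-1}(\alpha_i)\}$, where $\alpha_i = \chi_{i,i+1}$.

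Setting $\beta = w_0^{-1}(\alpha_i)$, I would split into the cases $\beta \in \Phi_+$ and $\beta \in \Phi_-$; in both cases the difference $\ell(s_iw_0\vp^{w_0^{-1}(\lambda)}) - \ell(w_0\vp^{w_0^{-1}(\lambda)})$ reduces to comparing $|x|$ with $|x+1|$ for $x = \pm(m_i-m_{i+1})$. If $\beta \in \Phi_+$, the dominance of $\lambda$ gives $m_i \geq m_{i+1}$ and the difference immediately evaluates to $+1$. If instead $\beta \in \Phi_-$, i.e.\ $w_0^{-1}(i) > w_0^{-1}(i+1)$, then the hypothesis of the lemma, applied to the pair $k = w_0^{-1}(i+1) < l = w_0^{-1}(i)$, forces $m_{i+1} < m_i$ strictly, and the difference again evaluates to $+1$. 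The main obstacle is precisely this latter case: without the hypothesis, the equality $m_i = m_{i+1}$ would produce a length decrease, so the sorting condition on $w_0$ is essential to rule out that degeneracy. The rest is bookkeeping against the length formula.
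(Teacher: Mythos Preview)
Your proof is correct. Both you and the paper rely on the explicit length formula together with the hypothesis on $w_0$; the difference is only in packaging. The paper compresses the computation into the single identity $\ell(w_0\vp^{w_0^{-1}(\lambda)})=\ell(\vp^{w_0^{-1}(\lambda)})-\ell(w_0)$ and then asserts that this implies membership in both $\Adm(\lambda)$ and $\SW$, whereas you verify the defining condition $\ell(s_i\,\cdot)>\ell(\cdot)$ for each $s_i\in S$ directly. Your route is a bit more self-contained: the paper's passage from the length identity to $\SW$ tacitly uses that $\vp^{\lambda}\in\SW$ for dominant $\lambda$ (the identity alone, for arbitrary $\mu$ in place of $w_0^{-1}(\lambda)$, would not suffice), and unpacking that step is exactly your case analysis on the sign of $w_0^{-1}(\alpha_i)$.
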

\begin{proof}
By the assumption, we have
$$\ell(w_0\vp^{w_0^{-1}(\lambda)})=\ell(\vp^{w_0^{-1}(\lambda)})-\ell(w_0)$$
because $w_0^{-1}\lambda=(m_{w_0(1)},\ldots, m_{w_0(n)})$.
This implies both $w_0\vp^{w_0^{-1}(\lambda)}\in \Adm(\lambda)$ and $w_0\vp^{w_0^{-1}(\lambda)}\in \SW$.
\end{proof}

Using the lemma above, we will show the following proposition:
\begin{prop}
\label{NCC}
Let $\lambda=(m_1,\ldots, m_n)\in X_*(T)_+$.
Assume $n\geq 4$.
If there exists $2\le i\le n-2$ such that $m_i>m_{i+1}$ (equivalently $m_i\neq m_{i+1}$), then $\lambda$ is not {\CC}.
\end{prop}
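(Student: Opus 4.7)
The plan is to exhibit a single element $w \in \SAdm(\lambda)^\circ$ such that $p_1(w)$ is not a (twisted) Coxeter element of $W_0$ and $\pi(X_w(b)) \neq \emptyset$ for the basic element $b \in B(G,\lambda)$. By the decomposition $X_\lambda(b) = \bigsqcup_{w' \in \SAdm(\lambda)^\circ} \pi(X_{w'}(b))$ recalled in Section \ref{Coxetertype}, the existence of such a $w$ forces $\lambda$ not to be \CC.

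The hypothesis that $\lambda$ is dominant together with $m_i > m_{i+1}$ implies $m_j > m_{j'}$ for every $j \le i < j'$. Consequently, by Lemma \ref{coxlemm}, any permutation $w_0 \in S_n$ whose one-line notation interleaves the increasing sequences $(1,\ldots,i)$ and $(i+1,\ldots,n)$ satisfies the lemma's hypothesis, producing $w := w_0 \vp^{w_0^{-1}\lambda} \in \SAdm(\lambda)^\circ$ with $w \in \SW$. The assumption $2 \le i \le n-2$ ensures that both blocks $\{1,\ldots,i\}$ and $\{i+1,\ldots,n\}$ have cardinality at least two, giving sufficient freedom to choose such a $w_0$ satisfying the additional conditions: (a) $w_0$ is not an $n$-cycle, hence not a Coxeter element of $S_n = W_0$; and (b) $w_0$ is contained in no proper standard parabolic subgroup $W_{S'} \subsetneq W_0$, i.e., preserves no proper partition of $\{1,\ldots,n\}$ into consecutive intervals. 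For $i = 2$, one may take $w_0 = (1\ 3)(2\ 4\ 5\ \cdots\ n)$, whose one-line form is $(3,4,1,5,6,\ldots,n,2)$; symmetric choices work for $i = n-2$ and analogous ``cross-block'' permutations handle intermediate $i$.

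To verify $\pi(X_w(b)) \neq \emptyset$, note first that $w \in \SW$ forces $p_2(w) = e$, so $p_2(w)^{-1} p_1(w) p_2(w) = w_0$, which by (b) lies outside $\bigcup_{S'\subsetneq S} W_{S'}$; hence the sufficient emptiness condition of Proposition \ref{emptiness} fails. For genuine non-emptiness, I would invoke Theorem \ref{Palcove}: the Kottwitz condition for the trivial pair $(S, e)$ is automatic since $b \in B(G,\lambda)$, while for each proper $S' \subsetneq S$ and $\tilde{w}_0 \in W_0$ making $w\ba$ a $^{\tilde{w}_0}P_{S'}$-alcove, condition (i) forces $\tilde{w}_0^{-1} w_0 \tilde{w}_0 \in W_{S'}$, and one then verifies that the Kottwitz image of the conjugated $w$ in the Levi $M_{S'}$ (a product of $\GL$-factors) coincides with that of a $\sigma$-conjugate $b' \in M_{S'}(L)$ of $b$. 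This coincidence follows from the basic-ness of $b$ and a direct computation of the valuations of determinants on each $\GL$-factor. Combined with (a) and $X_w(b) \neq \emptyset$, this exhibits a non-empty piece of $X_\lambda(b)$ missing from $\bigsqcup_{w' \in \SAdm(\lambda)^\circ_\cox}\pi(X_{w'}(b))$, completing the argument.

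The main obstacle is the systematic non-emptiness verification via Theorem \ref{Palcove}: enumerating the proper pairs $(S', \tilde{w}_0)$ that give genuine $P$-alcove structures and checking the Kottwitz equality on each associated Levi. The role of the assumption $2 \le i \le n-2$ is precisely to allow the two blocks to be mixed by $w_0$ non-cyclically and without respecting any consecutive block decomposition, a flexibility that is unavailable when the descent in $\lambda$ occurs at an extremal position.
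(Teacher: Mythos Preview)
Your overall strategy---exhibit some $w=w_0\vp^{w_0^{-1}\lambda}\in\SAdm(\lambda)^\circ\setminus\SAdm(\lambda)^\circ_{\cox}$ with $X_w(b)\neq\emptyset$---matches the paper's. The gap is in the non-emptiness step. You correctly note that condition~(b) blocks Proposition~\ref{emptiness}, but that proposition is only a \emph{sufficient} condition for emptiness; its failure proves nothing. You then appeal to Theorem~\ref{Palcove} and assert that for every proper pair $(S',\tilde w_0)$ making $w\ba$ a $^{\tilde w_0}P_{S'}$-alcove, the Levi Kottwitz condition holds, saying this ``follows from the basic-ness of $b$ and a direct computation.'' This is precisely the content that needs proof, and you do not supply it. Because your $w_0$ is \emph{not} an $n$-cycle (e.g.\ $(1\ 3)(2\ 4\ \cdots\ n)$ has cycle type $(2,n-2)$), it \emph{is} conjugate into proper standard parabolics---for instance into $W_{\{s_1,s_3,\ldots,s_{n-1}\}}$---so condition~(i) of the $P$-alcove definition can hold for proper $S'$. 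You would then have to analyze condition~(ii) for each such pair and, whenever both hold, verify the Kottwitz equality on the Levi; none of this is done. There is no general principle guaranteeing these equalities for basic $b$: if there were, Theorem~\ref{Palcove} would be vacuous in the basic case, contradicting Proposition~\ref{emptiness}.

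The paper avoids this difficulty altogether by choosing $w_0=c$ to be an $n$-\emph{cycle} with $\ell(c)>n-1$. Such $c$ is still not a Coxeter element (Coxeter elements of $S_n$ have length exactly $n-1$), and the interleaving hypothesis of Lemma~\ref{coxlemm} is arranged to hold. The payoff is that an $n$-cycle is conjugate into no proper $W_{S'}$, so already condition~(i) of the $P$-alcove definition forces $S'=S$; Theorem~\ref{Palcove} then gives $X_{c\vp^{c^{-1}\lambda}}(b)\neq\emptyset$ with no Kottwitz check required. The paper constructs such long interleaving $n$-cycles by induction for $n\ge 5$. For $n=4$ no such cycle is available in every subcase, and the paper treats the subcases by hand---including, for $m_1=m_2>m_3=m_4$, exactly your element $(1\ 3)(2\ 4)$, but with a direct argument (the representative $\dot w_0\vp^{w_0^{-1}\lambda}$ is itself basic, so $\dot w_0\vp^{w_0^{-1}\lambda}I\in X_{w}(\dot w_0\vp^{w_0^{-1}\lambda})$) rather than via the $P$-alcove criterion.
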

\begin{proof}
Let us first consider the case $n\geq 5$.
In this case, it is enough to show that there exists a cycle $c=(j_1\ j_2\ \cdots\ j_n)$ such that $\ell(c)>n-1$, $c^{-1}(1)<\cdots< c^{-1}(i)$ and $c^{-1}(i+1)<\cdots< c^{-1}(n)$ for any $2\le i\le n-2$.
Indeed, by Lemma \ref{coxlemm}, $c\vp^{c^{-1}(\lambda)}$ belongs to $\SAdm(\lambda)^\circ\setminus \SAdm(\lambda)^\circ_{\cox}$ if $m_i>m_{i+1}$.
However, the variety $X_{c\vp^{c^{-1}(\lambda)}}(b)\ (b\in B(G, \lambda))$ is non-empty.
This follows from Theorem \ref{Palcove} and the fact that $w_0^{-1}c\vp^{c^{-1}(\lambda)} w_0\in \widetilde W_{S'}$ implies $S'=S$.

To find such $c$, we will use induction on $n$.
If $n=5$, $c=(1\ 3\ 5\ 2\ 4)$ (resp.\ $c=(1\ 4\ 2\ 5\ 3)$) satisfies the condition when $i=2$ (resp.\ $i=3$).
Let us suppose $n\geq 6$.
By the induction hypothesis, there exists a cycle $c=(j_1\ j_2\ \cdots\ j_{n-1})$ fixing $n$ such that $\ell(c)>n-2$, $c^{-1}(1)<\cdots< c^{-1}(i)$ and $c^{-1}(i+1)<\cdots< c^{-1}(n-1)$ for any $2\le i\le n-3$.
Set $c'=c(n-1\ n)$.
Then it is easy to check that $\ell(c')=\ell(c)+1>n-1$, $c'^{-1}(1)<\cdots< c'^{-1}(i)$ and $c'^{-1}(i+1)<\cdots< c'^{-1}(n)$ for any $2\le i\le n-3$.
Similarly, by the induction hypothesis, there exists a cycle $c=(j_1\ j_2\ \cdots\ j_{n-1})$ fixing $1$ such that $\ell(c)>n-2$, $c^{-1}(2)<\cdots< c^{-1}(n-2)$ and $c^{-1}(n-1)< c^{-1}(n)$.
Set $c''=c(1\ 2)$.
Then it is easy to check that $\ell(c'')=\ell(c)+1>n-1$, $c''^{-1}(1)<\cdots< c''^{-1}(n-2)$ and $c''^{-1}(n-1)<c''^{-1}(n)$.
This finishes the proof for the case $n\geq 5$.

We next consider the case $n=4$.
If $m_1>m_2>m_3>m_4$ or $m_1>m_2>m_3=m_4$, set $c=(1\ 3\ 2\ 4)$.
Then, by Lemma \ref{coxlemm} and Theorem \ref{Palcove}, we have $c\vp^{c^{-1}(\lambda)}\in\SAdm(\lambda)^\circ\setminus \SAdm(\lambda)^\circ_{\cox}$ and $X_{c\vp^{c^{-1}(\lambda)}}(b)\neq \emptyset$, where $b\in B(G, \lambda)$.
If $m_1=m_2>m_3>m_4$, the same is true for $c=(1\ 4\ 2\ 3)$.

If $m_1=m_2>m_3=m_4$, we may assume $\lambda=(m_1,m_1,0,0)$.
Set $w_0=(1\ 3)(2\ 4)$ and consider $w_0\vp^{w_0^{-1}(\lambda)}\in \tW$.
Then its representative $\dot w_0\vp^{w_0^{-1}(\lambda)}\in G(L)$ is a basic element with Newton vector $(\frac{m_1}{2}, \frac{m_1}{2}, \frac{m_1}{2}, \frac{m_1}{2})(\preceq \lambda)$.
Moreover, by Lemma \ref{coxlemm}, $w_0\vp^{w_0^{-1}(\lambda)}$ belongs to $\SAdm(\lambda)^\circ\setminus \SAdm(\lambda)^\circ_{\cox}$.
However, the variety $X_{w_0\vp^{w_0^{-1}(\lambda)}}(\dot w_0\vp^{w_0^{-1}(\lambda)})$ is non-empty because $\dot w_0\vp^{w_0^{-1}(\lambda)}I\in X_{w_0\vp^{w_0^{-1}(\lambda)}}(\dot w_0\vp^{w_0^{-1}(\lambda)})$.
This finishes the proof for the case $n=4$.
\end{proof}

The goal of this section is to prove the following result.
\begin{theo}
\label{class}
Let $\lambda=(m_1,\ldots, m_n)\in X_*(T)_+$, and let $\kappa$ be an integer with $0\le \kappa <n$.
Then $\lambda$ is {\CC} if and only if $\lambda_{\ad}$ has one of the following forms:
\begin{align*}
((n-1)r+\kappa, -r,\ldots, -r)&,(r,\ldots, r,-(n-1)r-\kappa), \\
((n-1)r+1+\kappa, -r,\ldots, -r,-r-1)&, (r+1, r,\ldots, r,-(n-1)r-1-\kappa),
\end{align*}
where $r\geq 1$ (resp.\ $r\geq 0$) if $\kappa=0$ (resp.\ $1\le\kappa<n$).
\end{theo}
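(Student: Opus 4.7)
The plan is to structure the proof around a reduction step, the only-if direction, and the if direction. Proposition \ref{NCC} reduces, when $n\ge 4$, to $\lambda=(m_1,m,\ldots,m,m_n)$ with $m_1\ge m\ge m_n$; the case $n=3$ is handled by \cite[Corollary 6.5]{Shimada}, and $n=2$ is elementary since $S_2$ contains only the trivial and Coxeter element and Proposition \ref{emptiness} kills the identity coset. The remaining task is to show that, in the reduced setting, $\lambda$ is \CC{} if and only if $\min(m_1-m,\,m-m_n)\le 1$, and then to translate this condition into the four listed families by shifting by multiples of $\lambda_1$ and recording the Kottwitz parameter $\kappa\equiv\sum m_i\pmod n$.

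For the only-if direction, I would assume $m_1-m\ge 2$ and $m-m_n\ge 2$ and exhibit a non-Coxeter element of $\SAdm(\lambda)^\circ$ with non-empty Iwahori-level variety. I take $w_0=(1\ 2)(n-1\ n)\in W_0=S_n$, which for $n\ge 4$ is a product of disjoint transpositions and hence never an $n$-cycle. The strict inequalities $m_1>m$ and $m>m_n$ imply the hypothesis of Lemma \ref{coxlemm}, so $w:=w_0\vp^{w_0^{-1}(\lambda)}\in\SAdm(\lambda)^\circ\setminus\SAdm(\lambda)^\circ_{\cox}$. To verify $X_w(b)\ne\emptyset$ for the basic $b\in B(G,\lambda)$ I would apply Theorem \ref{Palcove}, running through the candidate pairs $(S',w_0')$ making $w\ba$ a ${}^{w_0'}P_{S'}$-alcove and using the gap hypotheses $\ge 2$ to violate the root-subgroup condition (ii) for every proper $S'$. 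The residual central-Newton subcase (when $n\mid\sum m_i$ yet $\lambda$ misses all four families) is handled as in the last paragraph of the proof of Proposition \ref{NCC} by swapping $w_0$ for an involution such as $(1\ 3)(2\ n)$ whose shifted translation lands on $\nu_b$.

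For the if direction, I would fix one of the listed families and enumerate $\SAdm(\lambda)^\circ$: it is in bijection with $W_0\lambda$ via minimal-length left coset representatives, of size $n$ for forms 1 and 2 and $n(n-1)$ for forms 3 and 4. For each $w=v\vp^{\mu}\in\SAdm(\lambda)^\circ$ with $v=p_1(w)$ not an $n$-cycle, I would identify $p_2(w)$ as the Weyl element carrying $w\ba$ into the dominant chamber, and use the specific shape of $\lambda$ to check that $p_2(w)^{-1}v\,p_2(w)\in W_{S'}$ for some proper $S'\subsetneq S$; Proposition \ref{emptiness} then yields $X_w(b)=\emptyset$, provided $\mu\ne\nu_b$, which holds outside a degenerate case where $v$ is already forced to be an $n$-cycle. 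The main obstacle is the enumeration for forms 3 and 4: the size-one jump at one end creates $n-1$ genuinely new Coxeter representatives beyond those arising in forms 1 and 2 (this is the combinatorial source of the $\sqcup_{j=1}^{n-1} J/\JO$ indexing appearing in Theorem \ref{maintheo}), and one must verify representative by representative that each remaining non-$n$-cycle element falls into the scope of Proposition \ref{emptiness}.
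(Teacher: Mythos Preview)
Your overall architecture is right, but the two directions both contain a concrete failure.

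\medskip
\textbf{Only-if direction.} The witness $w_0=(1\ 2)(n-1\ n)$ does not work. Take any $\lambda=(m_1,m,\ldots,m,m_n)$ with $m_1>m>m_n$ and form $w=w_0\vp^{w_0^{-1}(\lambda)}$. With $S'=\{s_1,s_{n-1}\}$ and trivial Weyl conjugator, condition (i) of the $P$-alcove definition holds since $w_0\in W_{S'}$, and a direct check of the entries of ${}^wI$ shows that condition (ii) holds for \emph{every} root in $\Phi_+\setminus\Phi_{S'}$, regardless of the size of the gaps $m_1-m$ and $m-m_n$. So $w\ba$ is a ${}^{1}P_{\{s_1,s_{n-1}\}}$-alcove. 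Comparing Kottwitz invariants in $M_{S'}$ (the first block contributes $m_1+m$, the last $m_n+m$, and these can never both match the basic class since $m_1\neq m_n$), Theorem~\ref{Palcove} actually forces $X_w(b)=\emptyset$. The paper instead uses $w_0=(1\ n)$: because this is a single long transposition, any $(S',u)$ satisfying condition (i) must have $S'$ containing the whole chain from $u^{-1}(1)$ to $u^{-1}(n)$, and then the root-subgroup condition at the boundary (the entries governed by $m_1-m_2$ and $m_{n-1}-m_n$) forces $S'=S$ once both gaps exceed $1$. Your ``residual central-Newton subcase'' is also mis-identified: the outlier is $\lambda_{\ad}=(1,0,\ldots,0,-1)$, which satisfies $\min(m_1-m,m-m_n)=1$ yet lies in none of the four families; the paper disposes of it by taking $w=s_0$ itself.

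\medskip
\textbf{If direction.} For forms 3 and 4 you correctly note that every non-Coxeter $w=w_0\vp^{w_0^{-1}(\lambda)}\in\SAdm(\lambda)^\circ$ has $w_0$ a non-$n$-cycle, but it is not true that Proposition~\ref{emptiness} applies to all of them directly. Already for $n=4$, $\lambda=(2,0,0,-1)$ and $w_0=(1\ 4)$ one finds $p_2(w)=1$, so $p_2(w)^{-1}p_1(w)p_2(w)=(1\ 4)$, which lies in no proper $W_{S'}$. The paper's extra ingredient is to first conjugate such $w$ by a suitable power of the length-zero element $\eta\in\Omega$ (Proposition~\ref{reduction}(i)); only after this rotation does the finite part land in a proper parabolic subgroup of $W_0$ and Proposition~\ref{emptiness} bite. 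Without this step your sufficiency argument does not close.
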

\begin{proof}
We first prove that the condition is necessary.
Clearly, $\lambda$ is not {\CC} if $\lambda_{\ad}=(0,\ldots, 0)$ unless $n=1$.
So, by Proposition \ref{NCC}, we are reduced to treat the following three cases:
\begin{enumerate}[(i)]
\item $m_1>m_2=\cdots=m_n$,
\item $m_1=\cdots=m_{n-1}>m_n$,
\item $m_1>m_2=\cdots=m_{n-1}>m_n$.
\end{enumerate}

If $\lambda$ satisfies (i) (resp.\ (ii)) and $m_1+\cdots+m_n=\kappa$ (resp. $m_1+\cdots+m_n=-\kappa$), then it is easy to see that $\lambda$ has one of the forms in the theorem.

Let $\lambda$ be a dominant cocharacter satisfying (iii).
If, moreover, $\lambda$ is {\CC}, then we have $m_1-m_2=1$ or $m_{n-1}-m_n=1$.
To show this, let $s=(1\ n)$ and consider $s\vp^{s^{-1}(\lambda)}\in \tW$.
This belongs to $\SAdm(\lambda)^\circ\setminus \SAdm(\lambda)^\circ_{\cox}$ by Lemma \ref{coxlemm}.
So it is enough to show that if $m_1-m_2>1$ and $m_{n-1}-m_n>1$, then $X_{s\vp^{s^{-1}(\lambda)}}(b)\neq \emptyset$, where $b\in B(G, \lambda)$.
If $(w_0, S')$ is a pair satisfying $w_0^{-1}s\vp^{s^{-1}(\lambda)} w_0\in \widetilde W_{S'}$, then 
$$\{s_{\min\{w_0^{-1}(1),w_0^{-1}(n)\}}, s_{\min\{w_0^{-1}(1),w_0^{-1}(n)\}+1},\ldots, s_{\max\{w_0^{-1}(1),w_0^{-1}(n)\}-1}\}\subseteq S'.$$
If, moreover, $\chi_{1, w_0^{-1}(1)}$ and $\chi_{w_0^{-1}(n), n}$ are both contained in $\Phi_{S'}$, then $S'=S$.
Indeed, $\chi_{1, w_0^{-1}(1)}\in \Phi_{S'}$ and $\chi_{w_0^{-1}(n), n}\in \Phi_{S'}$ imply $\{s_1, s_2,\ldots, s_{w_0^{-1}(1)-1}\}\subseteq S'$ and $\{s_{w_0^{-1}(n)}, s_{w_0^{-1}(n)+1},\ldots, s_{n-1} \}\subseteq S'$, respectively.
Note that the $(w_0(1), 1)$-th (resp.\ $(n, w_0(n))$-th) entry of $^{s\vp^{s^{-1}(\lambda)}}I$ is $\fp^{m_{w_0(1)}-m_1+1}=\fp^{m_2-m_1+1}$ (resp.\ $\fp^{m_n-m_{w_0(n)}+1}=\fp^{m_n-m_{n-1}+1}$) when $S'\neq S$.
Thus, if $m_1-m_2>1$ and $m_{n-1}-m_n>1$, there is no pair $(w_0, S')$ such that $s\vp^{s^{-1}(\lambda)}\ba$ is a $^{w_0}P_{S'}$-alcove unless $S'=S$.
By Theorem \ref{Palcove}, this implies $X_{s\vp^{s^{-1}(\lambda)}}(b)\neq \emptyset$ for $b\in B(G, \lambda)$.

To finish the proof for the necessity, it remains to show that $\lambda=(1,0,\ldots,0,-1)$ is not {\CC}.
By Lemma \ref{coxlemm}, $s_0$ belongs to $\SAdm(\lambda)^\circ\setminus \SAdm(\lambda)^\circ_{\cox}$.
However, $\dot s_0=\vp^{\chi_{1,n}^{\vee}}\dot{(1\ n)}\in G(L)$ is a basic element with Newton vector $(0,\ldots, 0)(\preceq \lambda)$ and $\dot s_0I$ is contained in $X_{s_0}(\dot s_0)$, i.e., $X_{s_0}(\dot s_0)\neq \emptyset$.
This means that $\lambda$ is not {\CC}.

For the sufficiency, we have to show $X_w(b)=\emptyset$ for any $w\in \SAdm(\lambda)^\circ\setminus \SAdm(\lambda)^\circ_{\cox}$, where $\lambda_{\ad}$ is one of the cocharacters in the theorem and $b\in B(G, \lambda)$.
As explained in \S\ref{Coxetertype}, it is enough to show this for a representative in $\lambda_{\ad}$.

If $\lambda=((n-1)r+\kappa, -r,\ldots, -r)$ with $\kappa, r$ in the theorem, then it is easy to check
$$\SAdm(\lambda)^\circ=\{w_0\vp^{w_0^{-1}(\lambda)}\mid \text{$w_0=(1\ 2\ \cdots\ j)$ for some $1\le j\le n$}\}.$$
If $w_0=(1\ 2\ \cdots\ j)$, then $p_1(w_0\vp^{w_0^{-1}(\lambda)})=w_0, p_2(w_0\vp^{w_0^{-1}(\lambda)})=1$, and hence $$p_2(w_0\vp^{w_0^{-1}(\lambda)})^{-1}p_1(w_0\vp^{w_0^{-1}(\lambda)})p_2(w_0\vp^{w_0^{-1}(\lambda)})=w_0.$$
By Proposition \ref{emptiness}, $X_{w_0\vp^{w_0^{-1}(\lambda)}}(b)=\emptyset$ for $b\in B(G, \lambda)$ and $w_0=(1\ 2\ \cdots\ j)$ with $1\le j\le n-1$.
Thus $\lambda$ is {\CC} in this case.
The proof for the case $\lambda=(r,\ldots, r,-(n-1)r-\kappa)$ is similar.

If $\lambda=((n-1)r+1+\kappa, -r,\ldots, -r,-r-1)$ with $\kappa, r$ in the theorem, then any element in $\SAdm(\lambda)^\circ$ can be written as $w_0\vp^{w_0^{-1}(\lambda)}$ for some $w_0\in W_0$.
If $w_0\vp^{w_0^{-1}(\lambda)}\in\SAdm(\lambda)^\circ$ satisfies $w_0^{-1}(1)<w_0^{-1}(n)$, then we have $$w_0=(1\ 2\ \cdots\ w_0^{-1}(1))(n\ n-1\ \cdots\ w_0^{-1}(n)).$$
Using Proposition \ref{emptiness}, we can check that $X_{w_0\vp^{w_0^{-1}(\lambda)}}(b)$ is empty for $b\in B(G, \lambda)$.
If $w_0\vp^{w_0^{-1}(\lambda)}\in\SAdm(\lambda)^\circ$ satisfies $w_0^{-1}(1)>w_0^{-1}(n)$, then we have $$w_0=(1\ 2\ \cdots\ w_0^{-1}(n)\ n\ n-1\ \cdots\ w_0^{-1}(1)).$$
This is a Coxeter element if and only if $w_0^{-1}(1)=w_0^{-1}(n)+1$.
Set $k=w_0^{-1}(1),\ l=w_0^{-1}(n)$.
Let $\de$ be the matrix of the form
$\begin{pmatrix}
0 & \vp \\
1_{n-1} & 0
\end{pmatrix}$.
Then its image $\eta$ in $\tW$ is contained in $\Omega$.
Further, $\eta^{n-k+1}w_0\vp^{w_0^{-1}(\lambda)}\eta^{-(n-k+1)}$ is equal to
$$(n-k+2\ n-k+3\ \cdots \ n-k+l+1 \ n-k+1\ n-k\ \cdots\ 1)\vp^{((n-1)r+\kappa,-r,\ldots, -r)}.$$
Note that the cocharacter $((n-1)r+\kappa,-r,\ldots, -r)$ is not central by assumption on $\kappa, r$.
So, by Proposition \ref{reduction} (i) and Proposition \ref{emptiness}, we have $X_{w_0\vp^{w_0^{-1}(\lambda)}}(b)=\emptyset$ for $b\in B(G,\lambda)$, unless $k=l+1$.
Thus $\lambda$ is {\CC} in this case.
The proof for the case $\lambda=(r+1, r,\ldots, r,-(n-1)r-1-\kappa)$ is similar.
\end{proof}

\section{Geometric Structure}
\label{structure}
Set $\kappa=v_L\circ \det$.
Fix a basic element $b\in G(L)$ with $0\le \kb<n$.
Put $n'=\mathrm{gcd}(\kappa(b),n)$, and let $n_0, k_0$ be the non-negative integers such that
$$n=n'n_0,\quad\kappa(b)=n'k_0.$$
To study the geometric structure, we choose a representative $b$ of the basic $\sigma$-conjugacy class $[b]$.
In this section, we will work with the {\it special representative} $b_{sp}$ attached to $\kb$ introduced in \cite[Definition 5.2]{CI2}.
This is the block-diagonal matrix of size $n\times n$ with $(n_0\times n_0)$-blocks of the form
${\begin{pmatrix}
0 & \vp \\
1_{n_0-1} & 0\\
\end{pmatrix}}^{k_0}$.

Let $J=J_b(F)$ and let $\JO=J\cap K$ be a maximal compact subgroup of $J$.
In our case, $J$ is isomorphic to $\GL_{n'}(\Da)$, where $D_{k_0/n_0}$ denotes the central division algebra over $F$ with invariant $k_0/n_0$.
Let $\cO_{\Da}$ be the ring of integers of $\Da$.
Then $\JO$ is isomorphic to $\GL_{n'}(\cO_{\Da})$.

\subsection{The Iwahori case}
\label{Iwahori}
We keep the notation above and assume $b=b_{sp}$.
From now on we will consider the dominant cocharacters
\[
  \lr=
  \begin{cases}
    ((n-1)r+\kb, -r,\ldots,-r) & (i=0) \\
    ((n-1)r+1+\kb, -r,\ldots,-r,-r-1) & (i=1),
  \end{cases}
\]
where $r>0$ (resp.\ $r\geq 0$) if $\kb=0$ (resp.\ $1\le \kb<n$).
Let us first study the Iwahori level affine Deligne-Lusztig varieties lying over $\XR$.
These are (possibly empty) varieties of the form $X_w(b)$ with $w\in W_0\vp^{\lr}W_0$.
Among them, we especially consider the parameters defined as follows:
Let $X_*(T)_{\lr}$ be the finite set of cocharacters obtained by the permutation of coordinates of $\lr$ fixing the first entry.
Clearly we have
\[
  |X_*(T)_{\lr}|=
  \begin{cases}
    1 & (i=0) \\
    n-1 & (i=1).
  \end{cases}
\]
In particular, $X_*(T)_{\lr}$ contains an element
\[
  \lr'=
  \begin{cases}
    \lambda_{0,r} & (i=0) \\
    ((n-1)r+1+\kb, -r-1, -r, \ldots,-r) & (i=1).
  \end{cases}
\]
Let $\tau=(1\ 2\ \cdots\  n)\in W_0$.
In this paper, affine Deligne-Lusztig varieties associated with
$$\vp^{\nr}\tau\in\tW,\quad \nr\in X_*(T)_{\lr}$$
play an important role.
In the sequel, we first study the case $\nr=\lr'$ using the results in \cite[Section 6]{CI2}.
Write $\dwr=\vp^{\lr'}\dt$ and $\wir=\vp^{\lr'}\tau\in \tW$ (note that if $i=0$, then $w_{0, r}$ is the same as the parameter studied there).

Set $V=L^n$ and $\sL_0=\cO^n$.
We define
\begin{align*}
g_b(x)=(x\ \ b\sigma(x)\ \ \cdots\ \  (b\sigma)^{n-1}(x)).
\end{align*}
Then the {\it admissible subset} $\aV$ of the isocrystal $(V, b\sigma)$ consists of the elements $x\in V$ satisfying $\det g_b(x)\in L^{\times}$.
We also define 
\begin{align*}
D_b=\mathrm{diag}(1, \vp^{\lfloor k_0/n_0\rfloor}, \vp^{\lfloor 2k_0/n_0\rfloor}, \ldots, \vp^{\lfloor (n-1)k_0/n_0\rfloor}),
\end{align*}
and set $\gbred(x)=g_b(x)D_b^{-1}$.
Analogously, let us denote by $\aL$ the subset of $\sL_0$ consisting of the elements $x\in \sL_0$ satisfying $\det \gbred(x)\in \cO^{\times}$.
Set 
\[
  \mr=
  \begin{cases}
    (0, r, 2r,\ldots, (n-1)r) & (i=0) \\
    (0,r+1, 2r+1,\ldots, (n-1)r+1) & (i=1).
  \end{cases}
\]
We write $\gbr(x)=g_b(x)\vp^{\mr}$ for any $x\in \aV$.

\begin{lemm}
\label{polygon}
Let $x\in \aV$.
Then there exist unique elements $\alpha_j\in \cO$ such that $(b\sigma)^n(x)=\Sigma_{j=0}^{n-1}\alpha_j(b\sigma)^j(x)$ with $v_L(\alpha_0)=\kb$.
Moreover, if $\kb>0$, we also have $v_L(\alpha_j)>0$ for $1\le j\le n-1$.
\end{lemm}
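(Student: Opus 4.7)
The existence and uniqueness of the $\alpha_j\in L$ are immediate from the definition of $\aV$: since $\det g_b(x)\in L^{\times}$, the vectors $x,b\sigma(x),\ldots,(b\sigma)^{n-1}(x)$ form an $L$-basis of $V$, and $(b\sigma)^n(x)$ has a unique expansion in this basis. What remains is to prove integrality of the $\alpha_j$ and the stated valuation bounds.

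My first step is to record the identity
\[
(b\sigma)^n=\vp^{\kb}\sigma^n\quad\text{on }V.
\]
Indeed, $b=b_{sp}\in G(F)$ gives $\sigma(b)=b$, so $(b\sigma)^n=b^n\sigma^n$; and the block-diagonal shape of $b$ (with $n'$ blocks of size $n_0$, each equal to $B^{k_0}$) combined with $B^{n_0}=\vp I_{n_0}$ yields $b^n=\vp^{n'k_0}I_n=\vp^{\kb}I_n$. The assertion $v_L(\alpha_0)=\kb$ now drops out of a determinant computation. Setting $M=g_b(x)$ and letting $C$ be the companion matrix of $(\alpha_0,\ldots,\alpha_{n-1})$, the defining relation reads $b\sigma(M)=MC$; taking determinants and using $\det C=(-1)^{n+1}\alpha_0$, $v_L(\det b)=\kb$, and $v_L\circ\sigma=v_L$, we obtain $v_L(\alpha_0)=\kb$ at once.

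For the remaining assertions I would invoke the Newton polygon of the twisted polynomial $P(T)=T^n-\sum_{j=0}^{n-1}\alpha_jT^j\in L[T;\sigma]$. Cyclicity of $x$ gives an isomorphism of isocrystals $(V,b\sigma)\cong L[T;\sigma]/L[T;\sigma]\cdot P(T)$, and by (the twisted-polynomial form of) the Dieudonn\'e--Manin classification, the Newton polygon of $P$ — namely the lower convex hull of $\{(j,v_L(\alpha_j))\}_{j=0}^{n-1}\cup\{(n,0)\}$ — coincides with the Newton polygon of the isocrystal $(V,b\sigma)$. Since $b$ is basic of Kottwitz invariant $\kb$, this polygon is the single segment from $(0,\kb)$ to $(n,0)$. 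Therefore each point $(j,v_L(\alpha_j))$ lies on or above this segment, giving
\[
v_L(\alpha_j)\ge\frac{(n-j)\kb}{n}\quad\text{for every }0\le j\le n-1.
\]
This yields integrality of every $\alpha_j$ and, when $\kb>0$, the strict positivity of $v_L(\alpha_j)$ for $1\le j\le n-1$.

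The main obstacle is justifying the Newton-polygon identification above: it is a standard but nontrivial ingredient from the classification of isocrystals. If a more hands-on argument is preferred, one can first reduce to $x\in\aL$ via the scaling invariance $v_L(\alpha_j(\lambda x))=v_L(\alpha_j(x))$ together with the $J_b(F)$-equivariance of the whole setup, and then expand $\alpha=\vp^{\kb}M^{-1}\sigma^n(x)$ directly using the factorisation $M=\gbred(x)D_b$ with $\gbred(x)\in\GL_n(\cO)$; the diagonal entries $\vp^{\lfloor jk_0/n_0\rfloor}$ of $D_b$ then produce the same bound $v_L(\alpha_j)\ge\kb-\lfloor jk_0/n_0\rfloor\ge(n-j)\kb/n$.
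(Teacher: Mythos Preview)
Your proposal is correct and follows essentially the same route as the paper: the first assertion is \cite[Lemma 6.1]{CI2}, whose proof is precisely the Newton polygon argument you outline (the lower convex hull of the $(j,v_L(\alpha_j))$ equals the Newton polygon of the isocrystal, here the segment of slope $\kb/n$), and the paper deduces the second assertion by observing that this slope is positive when $\kb>0$. Your determinant computation for $v_L(\alpha_0)=\kb$ and the identity $(b\sigma)^n=\vp^{\kb}\sigma^n$ are exactly the ingredients used in \cite{CI2}; the alternative hands-on reduction to $x\in\aL$ is a valid supplement but not needed.
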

\begin{proof}
The first assertion is \cite[Lemma 6.1]{CI2}.
If $\kb>0$, then the slope of the Newton polygon of $(V, b\sigma)$ is positive.
So the last assertion also follows from the proof of \cite[Lemma 6.1]{CI2}.
\end{proof}

\begin{lemm}
\label{gbr}
Let $x\in \aV$.
We have 
$$b\sigma(\gbr(x))=\gbr(x)\dwr a,$$
where $a\in I$ is a matrix, which can differ from the identity matrix only in the last column.
\end{lemm}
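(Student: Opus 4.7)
The plan is to prove the lemma by a direct matrix computation, comparing the two sides column by column. The first step is to use Lemma \ref{polygon} to rewrite $b\sigma(g_b(x)) = g_b(x) M$, where $M$ is the companion-type matrix whose $j$th column is $e_{j+1}$ for $1 \le j \le n-1$ and whose last column is $(\alpha_0, \alpha_1, \dots, \alpha_{n-1})^{T}$. Consequently $b\sigma(\gbr(x)) = g_b(x) M \vp^{\mr}$, which is obtained from $M$ by scaling column $j$ by $\vp^{(\mr)_j}$.

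Next I would expand the right-hand side $\gbr(x) \dwr = g_b(x) \vp^{\mr + \lr'} \dt$. Since $\dt$ realises the $n$-cycle $\tau = (1\ 2\ \cdots\ n)$, right multiplication by $\dt$ permutes columns cyclically, so the $j$th column of $\vp^{\mr + \lr'} \dt$ is $\vp^{(\mr + \lr')_{j+1}} e_{j+1}$ for $1 \le j \le n-1$ and $\vp^{(\mr + \lr')_1} e_1$ for $j = n$. A direct inspection in the two cases $i = 0$ and $i = 1$ shows $(\mr + \lr')_{j+1} = (\mr)_j$ for $j = 1, \dots, n-1$, so the first $n - 1$ columns of $M \vp^{\mr}$ and $\vp^{\mr + \lr'} \dt$ already coincide. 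This forces any $a$ solving $\vp^{\mr + \lr'} \dt \cdot a = M \vp^{\mr}$ to agree with the identity in its first $n - 1$ columns, which is exactly the shape claimed.

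The last column of $a$ is then determined by matching the remaining column. Equating the last columns of $g_b(x) \vp^{\mr + \lr'} \dt \cdot a$ and $g_b(x) M \vp^{\mr}$ gives $a_{n,n} = \alpha_0 \vp^{-\kb}$, which is a unit in $\cO$ by the equality $v_L(\alpha_0) = \kb$ from Lemma \ref{polygon}, together with entries $a_{k,n}$ ($k < n$) equal to explicit monomials in $\vp$ and $\alpha_k$ whose $\vp$-exponent is nonnegative. To conclude $a \in I$ I would verify that each strictly upper-triangular entry $a_{k,n}$ ($k < n$) lies in $\fp$: whenever $r \ge 1$, the $\vp$-exponent contributes at least $(n-k)r \ge 1$, while if $r = 0$ the hypothesis forces $\kb \ge 1$, and the second statement of Lemma \ref{polygon} then supplies $v_L(\alpha_k) > 0$ for $1 \le k \le n-1$.

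The main obstacle is purely bookkeeping: tracking the cyclic shift induced by $\dt$ through the shifts by $\mr$ and $\lr'$, and handling the two cases $i = 0,1$ together with the boundary case $r = 0$ when verifying the Iwahori condition on $a$. No ingredient deeper than Lemma \ref{polygon} is needed.
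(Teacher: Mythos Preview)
Your argument is correct and is precisely the direct companion-matrix computation that the paper has in mind when it writes ``the proof follows along the same line as \cite[Lemma 6.7]{CI2}''. You have simply spelled out that computation, including the extra verification (using the second clause of Lemma~\ref{polygon}) needed for the boundary case $r=0$, $\kb>0$.
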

\begin{proof}
The proof follows along the same line as \cite[Lemma 6.7]{CI2}.
\end{proof}

For an integer $m$, let $0\le [m]_{n_0}<n_0$ denote its residue modulo $n_0$.
Let $v_0\in \GL_{n_0}(L)$ be the permutation matrix whose $j$-th column is $e_{1+[(j-1)k_0]_{n_0}}$.
Let $v\in \GL_n(L)$ denote the block-diagonal matrix, whose $n_0\times n_0$ blocks are each equal to $v_0$.
Further, let $\Dr$ be the perfection of $n'-1$-dimensional Drinfeld's upper half-space over $\Fqn$.
The following statements are generalizations of \cite[Theorem 6.5 (ii)]{CI2}, \cite[Proposition 6.12]{CI2}, \cite[Proposition 6.15]{CI2} and \cite[Theorem 6.17]{CI2}.

\begin{prop}
\label{ADLVwir}
\begin{enumerate}[(i)]
\item The map $$\aV\rightarrow \Xr,\quad x\mapsto \gbr(x)$$
is surjective.
\item Let $\XL$ be the image of $\aL\subset \aV$ by the map in (i).
Then we have a scheme theoretic disjoint union decomposition
$$\Xr=\bigsqcup_{h\in J/\JO}h\XL.$$
\item The variety $\XL$ in (ii) is a locally closed subvariety of the Schubert cell $IvD_b\vp^{\mr} I/I$ and is isomorphic to $$\Dr\times \A^\p.$$
Here $\A$ is a finite-dimensional affine space over $\Fq$ with dimension depending on $i, r$. 
In particular, we have a decomposition of $\Fq$-schemes
$$\Xr\cong \bigsqcup_{J/\JO}\Dr\times \A^\p.$$
\end{enumerate}
\end{prop}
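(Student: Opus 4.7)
\medskip

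\noindent\textbf{Proof plan.} The three assertions are stated as generalizations of \cite[Theorem 6.5(ii), Propositions 6.12, 6.15, Theorem 6.17]{CI2}, which cover the parameter $i=0$. My plan is to follow those arguments and to check that in the twisted case $i=1$ the only change is a shift in the last coordinate of $\lr$, and hence of $\mr$, which enters only as bookkeeping.

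For (i), the map is well defined into $\Xr$ by Lemma \ref{gbr}. For surjectivity I would imitate the argument of \cite[Theorem 6.5(ii)]{CI2}: given $gI\in\Xr$ with $g^{-1}b\sigma(g)\in I\wir I$, I take $x$ to be a suitable rescaling of the first column of $g$, and then use the explicit shape $\dwr a$ from Lemma \ref{gbr} to show by induction on the column index $j$ that the $j$-th column of $g$ agrees, modulo $I$, with $(b\sigma)^{j-1}(x)\vp^{(\mr)_{j-1}}$. Lemma \ref{polygon} guarantees that the inductive step stays inside $\aV$.

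For (ii), the left multiplication action of $J$ on $V$ commutes with $b\sigma$ and induces, via $\gbr$, the natural $J$-action on $\Xr$ through the formula $h\cdot\gbr(x)I=\gbr(hx)I$. Since $\aL$ is $\JO$-stable (because $\JO$ preserves $\sL_0$), so is $\XL$. Coverage $\Xr=\bigcup_h h\XL$ is the analog of \cite[Proposition 6.12]{CI2}: every $x\in\aV$ can be translated by some $h\in J\cong \GL_{n'}(\Da)$ so that $hx\in\aL$, using transitivity of the $J$-action on admissible lattices of fixed reduced norm. Disjointness reduces to the statement that if $hx$ and $h'x'$ with $x,x'\in\aL$ give the same coset in $\Xr$, then $h^{-1}h'$ preserves $\sL_0$, hence lies in $\JO$.

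For (iii), I would first refine Lemma \ref{gbr} to pin down the Schubert cell. A direct matrix computation using $b=b_{sp}$, the definition of $v_0$, and the valuations in $D_b$ should show that for $x\in\aL$ one has $\gbred(x)\in IvI$, whence $\gbr(x)I\in IvD_b\vp^{\mr}I/I$. The isomorphism $\XL\simeq \Dr\times \A^\p$ then follows from the Iwahori factorization of this Schubert cell: the image of $x$ modulo $\vp$ gives a point of $\Dr$ (the admissibility of $\gbred(x)$ modulo $\vp$ is exactly the Drinfeld non-degeneracy condition over $\Fqn$), while the higher $\vp$-adic coefficients of $x$, subject only to admissibility, contribute the affine factor of a dimension that can be read off from $\mr$.

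\medskip

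\noindent The main technical obstacle will be part (iii) in the case $i=1$. The extra $-1$ in the last entry of $\lr$ changes the last column of $\gbr(x)$ compared with \cite{CI2}, so the last-column analysis underlying Lemma \ref{gbr} and the preceding matrix computation has to be redone: one must verify that the twisted $\dwr$ still lands in a single $I$-double coset of $\wir$, and then recount which coefficient slots of $x$ are forced by the admissibility condition (contributing to $\Dr$) versus free (contributing to $\A^\p$). Once this combinatorial bookkeeping is done, the remaining arguments should transport essentially verbatim from \cite{CI2}.
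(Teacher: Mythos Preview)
Your proposal takes a genuinely different route from the paper's proof for the case $i=1$, and it is worth spelling out the contrast.

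You plan a \emph{direct} adaptation of the arguments in \cite{CI2}: redo the column-by-column analysis of $g_{b,1,r}(x)$ with the shifted $\mu_{1,r}$, rerun the Schubert-cell computation, and recount free versus constrained coefficients of $x$. This is in principle feasible---Lemma~\ref{gbr} already asserts that the basic relation $b\sigma(g_{b,1,r}(x))=g_{b,1,r}(x)\dwr a$ goes through---but the ``bookkeeping'' you flag at the end is not purely cosmetic: the extra $-1$ in the second entry of $\lambda'_{1,r}$ changes not just the last column but the relation between consecutive columns, and you would have to verify directly that the resulting constraints still cut out $\Dr$ times an affine space rather than something more complicated.

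The paper instead \emph{reduces} the case $i=1$ to the already-established case $i=0$ via the Deligne--Lusztig reduction method (Proposition~\ref{reduction}). Concretely: conjugating $w_{1,r}$ by $\eta\in\Omega$ gives an element $A$ with $\ell(A)=\ell(w_{0,r})+2(n-1)$, and one shows that each of the $n-1$ intermediate steps $s_j\cdots s_{n-1}As_{n-1}\cdots s_j$ drops the length by exactly $2$, while the ``side'' varieties $X_{s_j\cdots s_{n-1}As_{n-1}\cdots s_{j+1}}(b)$ appearing in case~(iii) of the reduction are all empty. This emptiness is the crux and is handled by the separate combinatorial Lemma~\ref{emptylemm} together with Proposition~\ref{emptiness}. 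The upshot is an explicit iterated $\A^{1,\p}$-bundle $\pi\colon X_{w_{1,r}}(b)\to X_{w_{0,r}}(b)$, and then the decomposition and the explicit form of points in $X_{w_{1,r}}(b)_{\sL_0}$ are read off from the known description of $X_{w_{0,r}}(b)_{\sL_0}$ together with the fiber structure of $\pi$.

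What each buys: your approach is self-contained and avoids the reduction machinery, but requires redoing all of \cite[\S6]{CI2} with modified parameters and carries the risk that some step (e.g.\ the Schubert-cell containment or the coefficient count) does not go through as cleanly as you expect. The paper's approach trades that for a structural argument that explains \emph{why} $X_{w_{1,r}}(b)$ is an $\A^{n-1,\p}$-bundle over $X_{w_{0,r}}(b)$, at the cost of the nontrivial Lemma~\ref{emptylemm}. If you pursue your route, the point most in need of justification is your claim in~(iii) that the higher $\vp$-adic coefficients are ``subject only to admissibility''---in the paper's framework this freeness is a consequence of the bundle structure, not an input.
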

\begin{proof}
The case $i=0$ and $r>0$ is proved in \cite[Section 6]{CI2}.
If $\kb>0$, then we can check that the case $r=0$ also follows from the same proof as in \cite[Section 6]{CI2}, using Lemma \ref{polygon}.

We have to show the case $i=1$.
Let $\de$ be the matrix of the form
${\begin{pmatrix}
0 & \vp \\
1_{n-1} & 0\\
\end{pmatrix}}$, and let $\eta$ be its image in $\Omega$.
Then we have $$\de^{-1} \dot w_{1,r}\de=\vp^{(-r,\ldots,-r, (n-1)r+\kb)}\dt,$$
where $\tau=(1\ \cdots \ n)$.
Let us write $A=\vp^{(-r,\ldots,-r, (n-1)r+\kb)}\dt$.
By Proposition \ref{reduction} (i), the following map is an isomorphism:
$$\varphi\colon X_{w_{1,r}}(b)\rightarrow X_A(b),\quad gI\mapsto g\de I.$$
Moreover, when we see $A$ as an element of $\tW$,
it follows that
\begin{align}
\label{length}
\ell(s_j\cdots s_{n-2} s_{n-1} A s_{n-1}s_{n-2} \cdots s_j)=\ell(s_{j+1}\cdots s_{n-2}s_{n-1} A s_{n-1}s_{n-2} \cdots s_{j+1} )-2,
\end{align}
for any $1\le j\le n-1$.
Indeed, we compute
$$\ell(A)=\ell(w_{1,r})=(n-1)(nr+1+\kb)$$
and
$$\ell(s_1\cdots s_{n-1}As_{n-1}\cdots s_1)=\ell(w_{0,r})=(n-1)(nr-1+\kb).$$
Since $\ell(A)-\ell(s_1\cdots s_{n-1}As_{n-1}\cdots s_1)=2(n-1)$, we must have (\ref{length}) for all $1\le j\le n-1$.

Next, we claim 
\begin{align}
\label{empty}
\text{$X_{s_{j}\cdots s_{n-2}s_{n-1}As_{n-1}s_{n-2} \cdots s_{j+1}}(b)=\emptyset$ for all $1\le j \le n-1$.}
\end{align}
By Lemma \ref{emptylemm} and Proposition \ref{reduction} (ii), this is equivalent to
\begin{align}
\label{empty'}
\text{$X_{s_{j}\cdots s_2s_1A's_1s_2 \cdots s_{j-1}}(b)=\emptyset$ for all $1\le j \le n-1$,} \tag{$\ast$}
\end{align}
where $A'=\dt\vp^{\lambda_{0,r}}$.
To check ($\ast$), we use Proposition \ref{emptiness}.
Set $$A_j=s_{j}\cdots s_2s_1A's_1s_2 \cdots s_{j-1}.$$
Then we have to show 
$$p_2(A_j)^{-1}p_1(A_j)p_2(A_j)\in \bigcup_{S'\subsetneq S}W_{S'},$$
where $W_{S'}\subset W_0$ is the subgroup generated by $S'$.
For $1\le j\le n-1$, we compute
$$p_1(A_j)=s_1\cdots s_{j-1} s_{j+1}\cdots s_{n-1} ,\quad p_2(A_j)=s_{j+1}\cdots s_{n-1}.$$
So we have $$p_2(A_j)^{-1}p_1(A_j)p_2(A_j)=s_1\cdots s_{j-1} s_{j+1}\cdots s_{n-1}\in W_{S_j}\subset \bigcup_{S'\subsetneq S}W_{S'},$$
where $S_j=S\setminus \{s_j\}$.
This proves (\ref{empty'}) and hence (\ref{empty}).

Combining (\ref{length}) and (\ref{empty}) and using Proposition \ref{reduction} (iii), we can deduce that there exists a Zariski-locally trivial $\A^{1,\p}$-bundle
$$\pi_{j}\colon X_{s_{j+1}\cdots s_{n-1} A s_{n-1} \cdots s_{j+1}}(b)\rightarrow X_{s_{j}\cdots s_{n-1} A s_{n-1} \cdots s_{j}}(b),$$
for each $1\le j\le n-1$.
Moreover, for any fixed $g_0I\in X_{s_{j}\cdots s_{n-1} A s_{n-1} \cdots s_{j}}(b)$, we have 
$$\pi_{j}^{-1}(g_0I)=\{gI\in \cF lag\mid \inv(gI, g_0I)=\inv(g_0I, gI)=s_j\}.$$
Using these morphisms, we prove the case $i=1$.
For simplicitiy, we treat the case that $\kb=0$ or $\kb=1$, so that $v=D_b=1$.
The general case follows in the same way.
By the case $i=0$, $X_{w_{0,r}}(b)_{\sL_0}$ is contained in $I\vp^{\mu_{0,r}}I/I$.
Moreover, the proof of \cite[Theorem 6.17]{CI2} shows that $gI\in \cF lag$ lies in $X_{w_{0,r}}(b)_{\sL_0}$ if and only if $gI$ is represented by the element of the form
$$\begin{pmatrix}
1 &0 & \cdots & \cdots & \cdots & 0 \\
x_2 & 1 & 0 & \cdots & \cdots & 0 \\
x_3 & \ast & 1 & \cdots &\cdots & 0 \\
\vdots & \ddots & \ddots & \ddots & \vdots & \vdots \\
x_{n-1} & \ast & \cdots & \ast & 1 & 0 \\
x_n & \ast & \cdots & \ast & \ast & 1
\end{pmatrix}\vp^{\mu_{0,r}}I,$$
where
\begin{align*}
x_j=[x_{j,0}]+[x_{j,1}]\vp+\cdots+[x_{j, (j-1)r-1}]\vp^{(j-1)r-1}, \\
(x_{2,0}, \ldots ,x_{n,0})\in \Dr,\quad x_{j,k}\in \A^{1, \p}\ (k>0),
\end{align*}
and the entries marked by $\ast$ are certain functions of $x_2, \ldots, x_n$ lying in $\cO$.
The coefficient of $\vp^l$ in each $\ast$ is actually a function of only $x_{j,k}$ with $k\le l$.
Then we easily verify that $gI\in X_{w_{1, r}}(b)$ lies in the inverse image of $X_{w_{0,r}}(b)_{\sL_0}$ under the morphism 
$$\pi=\pi_1\circ \cdots\circ \pi_{n-1}\circ \varphi\colon X_{w_{1, r}}(b)\rightarrow X_{w_{0, r}}(b)$$
if and only if $gI$ is represented by the element of the form
$$\begin{pmatrix}
1 &0 & \cdots & \cdots & \cdots & 0 \\
x_2+[t_2]\vp^r & 1 & 0 & \cdots & \cdots & 0 \\
x_3+[t_3]\vp^{2r} & \ast & 1 & \cdots &\cdots & 0 \\
\vdots & \ddots & \ddots & \ddots & \vdots & \vdots \\
x_{n-1}+[t_{n-1}]\vp^{(n-2)r} & \ast & \cdots & \ast & 1 & 0 \\
x_n+[t_n]\vp^{(n-1)r} & \ast & \cdots & \ast & \ast & 1
\end{pmatrix}\vp^{\mu_{0,r}}\ds_1\cdots \ds_{n-1}\eta^{-1}I,$$
where $x_j$ are as above, $t_j\in \A^{1, \p}$, and $\ast$ are certain functions of $x_2, \ldots, x_n$, $t_2, \ldots, t_n$ lying in $\cO$.
This is true even for the case $r=0$ and $\kb>0$, but we need a little more attention.
So, in every case, we have a decomposition
$$X_{w_{1,r}}(b)=\bigsqcup_{h\in J/\JO}\pi^{-1}(hX_{w_{0,r}}(b)_{\sL_0}),$$
and each component is a locally closed subvariety of
$$I\vp^{\mu_{0,r}}Is_1Is_2I\cdots Is_{n-1}I\eta^{-1}I/I=I\vp^{\mu_{0,r}}s_1s_2\cdots s_{n-1}\eta^{-1}I/I,$$
which is isomorphic to $\Dr\times \A^\p$.
For any $x_j$ and $t_j$ as above, set
$$x'={^t(1, x_2+[t_2]\vp^r, \ldots, x_n+[t_n]\vp^{(n-1)r})}.$$
Then, by Lemma \ref{gbr} and an argument similar to that of \cite[Theorem 6.17]{CI2},
it follows that $gI\in\cF lag$ lies in $\pi^{-1}(\vp X_{w_{0,r}}(b)_{\sL_0})$ if and only if $gI=g_{b,1,r}(x')I$ for some $x'$ as above.
This implies (i).
Since we have $$X_{w_{1,r}}(b)_{\sL_0}=\pi^{-1}(\vp X_{w_{0,r}}(b)_{\sL_0})\cong \Dr\times \A^\p,$$
(ii) and (iii) also follow.
\end{proof}

For $w, w'\in \tW$, we write $w\rightarrow w'$ if there is a sequence $w=w_0, w_1,\ldots, w_m=w'$ of elements in $\tW$ such that for any $k$, $w_k=s_{j_k}w_{k-1}s_{j_k}\  (s_{j_k}\in S)$ and $\ell(w_k)\le \ell(s_{j_k}w_{k-1}s_{j_k})$.
We write $w\approx w'$ if $w\rightarrow w'$ and $w'\rightarrow w$.
\begin{lemm}
\label{emptylemm}
We have 
\begin{align*}
 w_{0,r}s_1&\approx w_{0,r}s_{n-1}, \\
 s_1w_{0,r}s_1s_2&\approx s_{n-1}w_{0,r}s_{n-1}s_{n-2}, \\
 &\ \:  \vdots \\
 s_{n-2}\cdots s_1 w_{0,r}s_1\cdots s_{n-1}&\approx s_2\cdots s_{n-1}w_{0,r}s_{n-1}\cdots s_1.
\end{align*}
\end{lemm}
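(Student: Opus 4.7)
The plan is to prove each of the $n-1$ asserted equivalences by exhibiting explicit sequences of simple-reflection conjugations that preserve length, as required by the definition of $\approx$ given just above the lemma. The key structural observation driving the computation is that $\lambda_{0,r} = ((n-1)r+\kb, -r, \ldots, -r)$ is fixed by $s_2, s_3, \ldots, s_{n-1}$, so $\vp^{\lambda_{0,r}}$ commutes with each of these simple reflections in $\tW$; combined with the reduced expression $\tau = s_1 s_2 \cdots s_{n-1}$, this gives $w_{0,r} = \vp^{\lambda_{0,r}} s_1 s_2 \cdots s_{n-1}$ and opens the door to braid and commutation manipulations in a neighborhood of $w_{0,r}$.

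For the base equivalence $w_{0,r}s_1 \approx w_{0,r}s_{n-1}$, I would first use the length formula recalled in Section \ref{notation} to check that both $s_1$ and $s_{n-1}$ are right ascents of $w_{0,r}$, so $\ell(w_{0,r}s_1) = \ell(w_{0,r}s_{n-1}) = \ell(w_{0,r}) + 1$. I would then construct a chain from $w_{0,r}s_1$ to $w_{0,r}s_{n-1}$ by conjugating successively by $s_{n-1}, s_{n-2}, \ldots$, repeatedly using the commutation of $\vp^{\lambda_{0,r}}$ with $s_2, \ldots, s_{n-1}$ and braid relations among consecutive $s_i$'s to keep the intermediate elements in a tractable form. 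At each step I would verify that the length is preserved (or decreased) so that every move is a legal $\to$-step; running the same chain in reverse gives the opposite direction.

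For the higher $k$, I would proceed by induction. Writing $v_k = s_1 s_2 \cdots s_k$ and $v'_k = s_{n-1} s_{n-2} \cdots s_{n-k}$, the $k$-th equivalence reads $v_{k-1}^{-1} w_{0,r} v_k \approx (v'_{k-1})^{-1} w_{0,r} v'_k$; that is, the $k$-th assertion sandwiches the $(k-1)$-th with one additional simple reflection on each side. Assuming the $(k-1)$-th equivalence, I extend the chain of simple conjugations from that level by appending analogous length-preserving conjugations at the outer layer, again leaning on commutation of $\vp^{\lambda_{0,r}}$ with $s_2, \ldots, s_{n-1}$ and braid identities near $\tau$.

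The main obstacle is the bookkeeping: making sure that none of the intermediate conjugations strictly increases the length. An alternative plan that would bypass most of this bookkeeping is to verify that both sides of each equivalence have minimal length in their ordinary conjugacy classes in $\tW$, and then invoke the general theorem (He) that minimal-length elements within a single conjugacy class are all related by $\approx$; the verification reduces to (a) a conjugacy check between $L_k$ and $R_k$, which follows from the explicit expressions since $v_{k-1}^{-1}w_{0,r}v_k$ and $(v'_{k-1})^{-1}w_{0,r}v'_k$ are conjugate via an element combining $v_{k-1}$, $(v'_{k-1})^{-1}$ and the symmetry of $\lambda_{0,r}$, and (b) length computations using the explicit formula for $\ell(w_0 \vp^{\lambda})$, which, thanks to the very simple shape of $\lambda_{0,r}$ and $\tau$, reduce to routine arithmetic with the entries $(n-1)r+\kb$ and $-r$.
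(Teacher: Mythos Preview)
Your proposal outlines two approaches, and each has a genuine gap.

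For the inductive route, the passage from level $k-1$ to level $k$ does not work as sketched. With your notation, $L_k = v_{k-1}^{-1}w_{0,r}v_k = s_{k-1}L_{k-1}s_k$, and likewise $R_k = s_{n-k+1}R_{k-1}s_{n-k}$: these are \emph{not} conjugates of $L_{k-1}$, $R_{k-1}$ by a simple reflection, so a chain $L_{k-1}\approx R_{k-1}$ cannot simply be ``wrapped'' to produce one for $L_k\approx R_k$. If you try to lift the $(k-1)$-th chain termwise, a step conjugating by $s_i$ survives only when $s_i$ commutes with both $s_{k-1}$ and $s_k$, which fails in general. The phrase ``appending analogous length-preserving conjugations at the outer layer'' therefore hides the entire content of the lemma.

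The alternative via He's theorem has a type mismatch. The relation $\approx$ in this paper is defined using only the \emph{finite} simple reflections $S$, whereas the minimal-length theorem for conjugacy classes in $\tW$ (He, He--Nie) produces chains of cyclic shifts by elements of $\tS$. So even if you check that $L_k$ and $R_k$ are $\tW$-conjugate and of minimal length in their class --- neither of which you actually verify --- you would obtain only the $\tS$-version of $\approx$, not the lemma as stated. (That weaker statement would happen to suffice for the application in Proposition~\ref{ADLVwir}, but it is not what is asserted.)

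The paper does not use induction on $k$. Instead, for each $j$ separately it first shows
\[
L_j \approx w_{0,r}s_1\cdots s_j s_{j-1}\cdots s_1
\quad\text{and}\quad
R_j \approx s_{n-1}\cdots s_{n-j}s_{n-j+1}\cdots s_{n-1}w_{0,r}
\]
by moving the simple reflections across $w_{0,r}$ one at a time, checking via an affine-root computation (the sign of $v_{j,k}\chi_{j-k,j-k+1}$) that each conjugation preserves length. This reduces the $j$-th claim to
\[
\vp^{\lambda_{0,r}}(1\ j{+}2\ \cdots\ n)(2\ \cdots\ j{+}1)\ \approx\ \vp^{\lambda_{0,r}}(1\ \cdots\ n{-}j{-}1\ n)(n{-}j\ \cdots\ n{-}1),
\]
which is then proved by an explicit chain of conjugations in $s_2,\ldots,s_{n-1}$ that slides the short cycle from $\{2,\ldots,j{+}1\}$ to $\{n{-}j,\ldots,n{-}1\}$; constancy of the length along this chain is verified by computing the inversion sets $\Phi(x_{j,k})$ and $\Phi(y_{j,k,j'})$ of the intermediate $W_0$-parts and using that only the roots $\chi_{\bullet,n}$ pair nontrivially with $\lambda_{0,r}$.
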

\begin{proof}
Let us first show
\begin{align*}
 w_{0,r}s_1&\approx w_{0,r}s_1, \\
 s_1w_{0,r}s_1s_2&\approx w_{0,r}s_1s_2s_1, \\
 &\ \:  \vdots \\
 s_{n-2}\cdots s_1 w_{0,r}s_1\cdots s_{n-1}&\approx w_{0,r}s_1\cdots s_{n-1}s_{n-2}\cdots s_1.
\end{align*}
We have $w_{0,r}s_1\cdots s_js_{j-1}\cdots s_1=\vp^{\lambda_{0,r}}(1\ j+2\ \cdots\ n)(2\ \cdots\ j+1)$ 
and
\begin{align*}
\ell(w_{0,r}s_1\cdots s_js_{j-1}\cdots s_1)&=(n-1)(nr-1+\kb)+2j-1 \\
&=\ell(s_{j-1}\cdots s_1w_{0,r}s_1\cdots s_j).
\end{align*}
For any $1\le k\le j$, set $v_{j,k}=s_{j-k}\cdots s_2 s_1w_{0,r}s_1\cdots s_j s_{j-1}s_{j-2}\cdots s_{j-k+1}$. 
Then, for any $1\le k\le j-1$,  we also have
\[
  v_{j,k}\chi_{j-k, j-k+1}=
  \begin{cases}
   \chi_{j-k+2, j+2} & (1\le j\le n-2) \\
   \chi_{j-k+2, j+2}+(nr+\kb)\delta & (j=n-1),
  \end{cases}
\]
where $\delta$ is the constant function with value $1$.
In particular, for fixed $1\le j\le n-1$, $v_{j,k}\chi_{j-k, j-k+1}$ is always positive.
This implies that $v_{j,1}$ can be transformed to $v_{j, j}$ with $\ell(v_{j,1})\le \ell(v_{j,2})\le \cdots \le \ell({v_{j,j}})$.
Since $\ell(v_{j,1})=\ell(v_{j,j})$, it follows that $v_{j,1}\approx v_{j,j}$ for any $1\le j\le n-1$, as we claimed.
In the same way we can show
\begin{align*}
 w_{0,r}s_{n-1}&\approx s_{n-1}w_{0,r}, \\
 s_{n-1}w_{0,r}s_{n-1}s_{n-2}&\approx s_{n-1}s_{n-2}s_{n-1}w_{0,r}, \\
 &\ \:  \vdots \\
 s_{2}\cdots s_{n-1} w_{0,r}s_{n-1}\cdots s_1&\approx s_{n-1}\cdots s_{1}s_{2}\cdots s_{n-1}w_{0,r}.
\end{align*}

By the discussion above, our statement is reduced to the equivalence
$$w_{0,r}s_1\cdots s_js_{j-1}\cdots s_1\approx s_{n-1}\cdots s_{n-j}s_{n-j+1}\cdots s_{n-1}w_{0,r}$$
for all $1\le j\le n-1$.
If $j=n-1$, we have to show the equivalence between 
$$w_{0,r}s_1\cdots s_{n-1}s_{n-2}\cdots s_1=\vp^{\lambda_{0,r}}(2\ \cdots\ n)$$
and
$$s_{n-1}\cdots s_{1}s_{2}\cdots s_{n-1}w_{0,r}=(1\ n)\vp^{\lambda_{0,r}}(1\ \cdots\ n).$$
In this case, it is easy to check that the transformation (by the conjugation by an element of $S$)
\begin{align*}
\vp^{\lambda_{0,r}}(2\ \cdots\ n)&\rightarrow s_1\vp^{\lambda_{0,r}}(2\ \cdots\ n)s_1 \\
&\rightarrow \cdots \\
&\rightarrow s_{n-1}\cdots s_1\vp^{\lambda_{0,r}}(2\ \cdots\ n)s_1\cdots s_{n-1} \\
&=(1\ n)\vp^{\lambda_{0,r}}(1\ \cdots\ n)
\end{align*}
gives this equivalence.

If $1\le j\le n-2$, we compute 
$$w_{0,r}s_1\cdots s_js_{j-1}\cdots s_1=\vp^{\lambda_{0,r}}(1\ j+2\ \cdots\ n)(2\ \cdots\ j+1)$$
and
$$s_{n-1}\cdots s_{n-j}s_{n-j+1}\cdots s_{n-1}w_{0,r}=\vp^{\lambda_{0,r}}(1\ \cdots\  n-j-1\ n)(n-j\ \cdots\  n-1).$$
Here we used $\vp^{\lambda_{0,r}}s_k=s_k\vp^{\lambda_{0,r}}\ (2\le k\le n-1)$. 
Both of these elements have length $(n-1)(nr-1+\kb)+2j-1$.
We are going to show that the transformation
\begin{align*}
&\vp^{\lambda_{0,r}}(1\ j+2\ \cdots\ n)(2\ \cdots\ j+1) \\
\rightarrow &\vp^{\lambda_{0,r}}(1\ j+1\ j+3\cdots\ n)(2\ \cdots\ j\ j+2) \\
\rightarrow &\vp^{\lambda_{0,r}}(1\ j\ j+3\cdots\ n)(2\ \cdots\ j-1\ j+1\ j+2) \\
\rightarrow &\cdots \\
\rightarrow &\vp^{\lambda_{0,r}}(1\ 2\ j+3\ \cdots\ n)(3\ \cdots\ j+2) \\
\rightarrow &\vp^{\lambda_{0,r}}(1\ 2\ j+2\ j+4 \cdots\ n)(3\ \cdots\ j+1\ j+3) \\
\rightarrow &\cdots \\
\rightarrow &\vp^{\lambda_{0,r}}(1\ \cdots\  n-j-1\ n)(n-j\ \cdots\  n-1)
\end{align*}
gives the desired equivalence.
This transformation changes $(2\ \cdots\ j+1)$ to $(n-j\ \cdots\  n-1)$ using only $s_2,\ldots s_{n-2}$.
The $W_0$-part of the element appearing in each process except for the last one is of the form
$$x_{j,k}=(1\ \cdots\ k-1\ k+j\ \cdots\ n)(k\ \cdots\ k+j-1)$$
or
$$y_{j,k,j'}=(1\ \cdots\ k-1\ k+j'\ k+j+1\ \cdots\ n)(k\ \cdots\ k+j'-1\ k+j'+1\ \cdots\  k+j),$$
where $2\le k\le n-j-1$ and $1\le j'\le j-1$.
For any $w_0\in W_0$, let $\Phi(w_0)=\{\chi\in \Phi_+\mid w_0\chi\in \Phi_-\}$.
Then we can check that
\begin{align*}
\Phi(x_{j,k})=\{\chi_{k,k+j-1},\ldots, \chi_{k+j-2,k+j-1}, \chi_{k-1,k},\ldots,&\chi_{k-1,k+j-1}, 
\chi_{1,n},\ldots, \chi_{n-1,n}\}
\end{align*}
and
\begin{align*}
\Phi(y_{j,k, j'})=\{\chi_{k-1,k+j},\ldots, &\chi_{k+j-1, k+j}, \chi_{k-1, k},\ldots \chi_{k-1, k+j'-2} \\
&\chi_{k+j', k+j'+1},\ldots,\chi_{k+j', k+j-1}, \chi_{1, n},\ldots, \chi_{n-1, n}\}.
\end{align*}
In particular, both of the sets $\Phi(x_{j,k})$ and $\Phi(y_{j,k,j'})$ contain $\{\chi_{1,n}, \ldots, \chi_{n-1, n}\}$ and $|\Phi(x_{j,k})\setminus \{\chi_{1,n}, \ldots, \chi_{n-1, n}\}|=|\Phi(y_{j,k,j'})\setminus \{\chi_{1,n}, \ldots, \chi_{n-1, n}\}|=2j-1$.
Therefore the length of each element appearing in the transformation above is always $(n-1)(nr-1+\kb)+2j-1$.
This finishes the proof.
\end{proof}

Using Proposition \ref{ADLVwir}, we obtain the following result.
\begin{coro}
\label{ADLVIwahori}
Let $\tau=(1\ 2\ \cdots\ n)$.
Then, for any $\nr\in X_*(T)_{\lr}$, there exists an irreducible component $X_{\vp^{\nr}\tau}(b)_0$ of $X_{\vp^{\nr}\tau}(b)$, which is a locally closed subvariety of a Schubert cell and is isomorphic to $\Dr\times \A^\p$.
Here $\A$ is a finite-dimensional affine space over $\F_q$.
Moreover, we have a scheme theoretic disjoint union decomposition
$$X_{\vp^{\nr}\tau}(b)=\bigsqcup_{h\in J/\JO}hX_{\vp^{\nr}\tau}(b)_0\cong \bigsqcup_{J/\JO}\Dr\times \A^\p.$$
\end{coro}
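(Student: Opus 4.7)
The case $i = 0$ is immediate from Proposition \ref{ADLVwir}: the set $X_*(T)_{\lr}$ contains only $\lambda_{0,r}$ itself, and $\vp^{\lambda_{0,r}}\tau = w_{0,r}$. For $i = 1$ and $\nr = \lambda'_{1,r}$, the statement is likewise Proposition \ref{ADLVwir}. What remains is to treat the $n-2$ cocharacters $\nr \in X_*(T)_{\lr}$ in which the entry $-r-1$ sits in position $j_0 + 1$ for some $j_0 \in \{2, \ldots, n-1\}$; write $\nu = (\nu_1, \ldots, \nu_n)$ for the coordinates of such a $\nr$.

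My plan is to replicate the argument of Proposition \ref{ADLVwir} after modifying the diagonal twist. I would define
\[
\mu_\nu \coloneqq \bigl(0,\ -\nu_2,\ -\nu_2-\nu_3,\ \ldots,\ -\nu_2-\cdots-\nu_n\bigr),
\]
which under our hypothesis takes the explicit form
\[
\mu_\nu = \bigl(0, r, 2r, \ldots, (j_0-1)r,\ j_0 r + 1,\ (j_0+1)r + 1,\ \ldots,\ (n-1)r + 1\bigr),
\]
and set $g_{b,\nu}(x) \coloneqq g_b(x)\vp^{\mu_\nu}$. The key step is to establish the analogue of Lemma \ref{gbr}, namely
\[
b\sigma(g_{b,\nu}(x)) = g_{b,\nu}(x)\,\vp^{\nr}\dt\, a_\nu
\]
for some $a_\nu \in I$ that differs from the identity only in the last column. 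This reduces to a direct matrix computation starting from the identity $b\sigma(g_b(x)) = g_b(x) M$, in which the last column of $M$ records the coefficients $\alpha_0, \ldots, \alpha_{n-1}$ from Lemma \ref{polygon}, and then conjugating $M$ by $\vp^{\mu_\nu}$. The $I$-membership of $a_\nu$ reduces to the valuation inequalities $v_L(\alpha_0) = \kb$ and $v_L(\alpha_j) > 0$ for $j \ge 1$ when $\kb > 0$ (both from Lemma \ref{polygon}), combined with the explicit entries of $\mu_\nu$.

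Granting this analogue of Lemma \ref{gbr}, every subsequent step of Proposition \ref{ADLVwir} transfers without change: the map $\aV \to X_{\vp^{\nr}\tau}(b)$, $x \mapsto g_{b,\nu}(x)$, is surjective; the image of $\aL$ furnishes an irreducible component $X_{\vp^{\nr}\tau}(b)_0$ which sits as a locally closed subvariety of the Schubert cell determined by $\mu_\nu$, $v$, and $D_b$, and is isomorphic to $\Dr \times \A^\p$; and the $J$-translates of this component give the desired scheme-theoretic disjoint union indexed by $J/\JO$. The main obstacle, in my view, is the valuation bookkeeping needed to verify the $I$-membership of $a_\nu$ uniformly in $j_0$, and in particular to handle the edge case $r = 0$ with $\kb > 0$ where one must rely on the strict positivity of the $v_L(\alpha_j)$; once this is settled, the corollary follows along the same lines as the proof of Proposition \ref{ADLVwir}.
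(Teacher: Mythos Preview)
Your approach is viable but takes a genuinely different route from the paper's.  The paper does \emph{not} repeat the analysis of Proposition~\ref{ADLVwir} for each $\nr$.  Instead it observes that the element
\[
(\ds_j\cdots\ds_1\,\de\,\ds_{n-1}\cdots\ds_{j+1})\,\vp^{\nu_j}\dt\,(\ds_j\cdots\ds_1\,\de\,\ds_{n-1}\cdots\ds_{j+1})^{-1}=\vp^{\nu_{j+1}}\dt
\]
conjugates one parameter to the next, checks that every intermediate conjugate has the same length, and then invokes Proposition~\ref{reduction}~(i),(ii) to obtain $J_b(F)$-equivariant isomorphisms $\phi_j\colon X_{\vp^{\nu_j}\tau}(b)\xrightarrow{\sim} X_{\vp^{\nu_{j+1}}\tau}(b)$.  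The component $X_{\vp^{\nu_j}\tau}(b)_0$ is then simply the image of $X_{w_{1,r}}(b)_{\sL_0}$ under $\phi_{j-1}\circ\cdots\circ\phi_1$, and tracing through the set-theoretic description of the reduction maps yields $g_b(x)\vp^{\mu_j}I$ --- which is exactly your $g_{b,\nu}(x)I$, since your $\mu_\nu$ agrees with the paper's $\mu_j$.

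So both arguments land on the same parametrisation; the difference is how one arrives there.  The paper's route is shorter (one length computation plus Proposition~\ref{reduction}), automatically gives $J$-equivariant isomorphisms between all the $X_{\vp^{\nu_j}\tau}(b)$, and recycles the already-proved surjectivity and Schubert-cell statements for $\nu_1$.  Your route is more self-contained and avoids the reduction machinery, but your claim that ``every subsequent step transfers without change'' is doing real work: the proof of Proposition~\ref{ADLVwir} for $i=1$ is itself a reduction argument (via $\mathbb A^1$-bundles down to $X_{w_{0,r}}(b)$), not a direct computation, so you would in fact need to redo the surjectivity of $\aV\to X_{\vp^{\nu}\tau}(b)$ and the explicit coordinate description from \cite{CI2} for each $\nu$ rather than simply invoke them.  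That is feasible (and your Lemma~\ref{gbr} analogue is indeed the essential ingredient), but it is more than just valuation bookkeeping.
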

\begin{proof}
We have to show this only in the case $i=1$.
Let $\nu_j$ be the element of $X_*(T)_{\lambda_{1, r}}$ whose $(j+1)$-th entry is $-r-1$.
Then we have $\vp^{\nu_1}\tau=w_{1, r}$ and the assertion for $X_{\vp^{\nu_1}\tau}(b)$ follows from Proposition \ref{ADLVwir} by setting $X_{\vp^{\nu_1}\tau}(b)_0=X_{w_{1, r}}(b)_{\sL_0}$.
Furthermore, for any $1\le j\le n-1$, we have an isomorphism
$$\phi_j\colon X_{\vp^{\nu_j}\tau}(b)\xrightarrow{\sim} X_{\vp^{\nu_{j+1}}\tau}(b).$$
Indeed, let 
$\de={\begin{pmatrix}
0 & \vp \\
1_{n-1} & 0\\
\end{pmatrix}}$.
Then it is easy to check that 
$$(\ds_j\cdots \ds_1\de \ds_{n-1}\cdots \ds_{j+1})\vp^{\nu_j}\dt (\ds_j\cdots \ds_1\de \ds_{n-1}\cdots \ds_{j+1})^{-1}=\vp^{\nu_{j+1}}\dt$$
and
\begin{align*}
\ell(\vp^{\nu_j}\tau)&=\ell(s_{j+1}\vp^{\nu_j}\tau s_{j+1})\\
&=\cdots \\
&=\ell(s_{n-1}\cdots s_{j+1}\vp^{\nu_j}\tau s_{j+1}\cdots s_{n-1}) \\
&=\ell(\eta s_{n-1}\cdots s_{j+1}\vp^{\nu_j}\tau s_{j+1}\cdots s_{n-1}\eta^{-1}) \\
&=\ell(s_1 \eta s_{n-1}\cdots s_{j+1}\vp^{\nu_j}\tau s_{j+1}\cdots s_{n-1}\eta^{-1}s_1) \\
&=\cdots \\
&=\ell(s_j\cdots s_1 \eta s_{n-1}\cdots s_{j+1}\vp^{\nu_j}\tau s_{j+1}\cdots s_{n-1}\eta^{-1}s_1\cdots s_j)\\
&=\ell(\vp^{\nu_{j+1}}\tau). 
\end{align*}
So, by Proposition \ref{reduction} (i) and (ii), we can construct $\phi_j$ for any $j$.

Let $X_{\vp^{\nu_j}\tau}(b)_0$ be the image of $X_{\vp^{\nu_1}\tau}(b)_0$ under the isomorphism $\phi_{j-1}\circ \cdots \circ \phi_1$.
Since our assertion is true for $X_{\vp^{\nu_1}\tau}(b)$, the same assertion for $X_{\vp^{\nu_j}\tau}(b)$ follows immediately except that $X_{\vp^{\nu_j}\tau}(b)_0$ is contained in a Schubert cell.
Let $$\mu_j=\mu_{1, r}-(0, \overbrace{1, \ldots, 1}^{j-1}, 0, \ldots, 0).$$
Then, again by Proposition \ref{ADLVwir} (i), any element in $X_{\vp^{\nu_1}\tau}(b)_0$ can be written as $g_b(x)\vp^{\mu_1}I$ for some $x\in \aL$.
Further, using the set-theoretical description right after Proposition \ref{reduction}, we can easily verify that 
$$(\phi_{j-1}\circ \cdots \circ \phi_1)(g_b(x)\vp^{\mu_1}I)=g_b(x)\vp^{\mu_j}I,$$
i.e., any element in $X_{\vp^{\nu_j}\tau}(b)_0$ can be written as $g_b(x)\vp^{\mu_j}I$ for some $x\in \aL$.
By this and the same argument as in \cite[Proposition 6.15]{CI2}, we can show that $X_{\vp^{\nu_j}\tau}(b)_0$ is contained in $IvD_b\vp^{\mu_j}I/I$.
This finishes the proof.
\end{proof}
 
\subsection{The Hyperspecial Case}
We keep the notation and assumptions of \S\ref{Iwahori}.
Next we deduce the geometric structure of the hyperspecial level affine Deligne-Lusztig varieties $X_{\lr}(b)$.
To complete this, we relate the Iwahori and hyperspecial cases.
\begin{lemm}
\label{inj}
Let $\tau=(1\ 2\ \cdots \ n)$.
\begin{enumerate}[(i)]
\item The projection map $$X_{w_{0,r}}(b)\rightarrow X_{\lambda_{0,r}}(b),\quad gI\mapsto gK$$ is injective.
\item The projection map $$X_{\vp^\nu\tau}(b)\rightarrow X_{\lambda_{1, r}}(b),\quad gI\mapsto gK$$
is injective for any $\nu\in X_*(T)_{\lambda_{1, r}}$.
\end{enumerate}
\end{lemm}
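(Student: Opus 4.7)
My plan is to exploit the explicit matrix parametrizations established in the proof of Proposition~\ref{ADLVwir}(iii) and Corollary~\ref{ADLVIwahori} to reduce the injectivity of $\pi$ to a concrete uniqueness statement about the representatives.

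The first step is to use the $J$-equivariant scheme-theoretic disjoint union $X_w(b) = \bigsqcup_{h \in J/\JO} h X_w(b)_0$ together with the $J$-equivariance of $\pi$ to split the problem into two sub-claims: (a) $\pi|_{X_w(b)_0}$ is injective, and (b) the subsets $\pi(h X_w(b)_0)$ for distinct classes $h \in J/\JO$ are pairwise disjoint in $X_{\lambda}(b)$. For (b), I will show that if $h g K = h' g' K$ with $g I, g' I \in X_w(b)_0$, then the lattice $(h')^{-1} h \cdot g\cO^n$ equals $g'\cO^n$, and a direct inspection of the explicit form of the lattice $\gbr(x)\cO^n$ (in particular the shape of its reduction modulo $\vp$) forces $(h')^{-1} h \in \JO$, so that $h$ and $h'$ represent the same class.

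For claim (a), the key input from the proof of Proposition~\ref{ADLVwir}(iii) is that every element of $X_{w_{0,r}}(b)_{\sL_0}$ (and of $X_{w_{1,r}}(b)_{\sL_0}$ in case (ii)) has a \emph{unique} representative of the form $M \vp^{\mu_{0,r}}I$, where $M$ is the lower unitriangular matrix with first column $(1, x_2, \ldots, x_n)^{t}$, $x_j = \sum_{k=0}^{(j-1)r-1}[x_{j,k}]\vp^k$ of $\vp$-degree less than $(j-1)r$, and whose remaining subdiagonal $\ast$ entries are prescribed functions of the $x_j$'s (with an additional $t_j$-dependence in case (ii)), and where the coefficient of $\vp^l$ in each $\ast$ depends only on $x_{j,k}$ with $k \le l$. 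If $M \vp^{\mu_{0,r}} K = M' \vp^{\mu_{0,r}} K$ for two such matrices, then setting $N = M^{-1}M'$, which is lower unitriangular with entries in $\cO$, the identity becomes $\vp^{-\mu_{0,r}} N \vp^{\mu_{0,r}} \in K$, i.e., $N_{ij} \in \vp^{(i-j)r}\cO$ for all $i > j$. Reading this condition on the first column gives $x'_j - x_j \equiv 0 \pmod{\vp^{(j-1)r}}$, and the $\vp$-degree bound on $x_j$ forces $x_j = x'_j$ for every $j$. Propagating column by column using the stated dependence of the $\ast$ entries on the $x_{j,k}$ (and $t_j$'s), an induction yields $M = M'$, hence $M I = M' I$.

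For the remaining $\vp^{\nu_j}\tau$ with $j > 1$ in case (ii), the isomorphisms $\phi_{j-1} \circ \cdots \circ \phi_1$ constructed in the proof of Corollary~\ref{ADLVIwahori} intertwine the matrix representatives via $g_b(x) \vp^{\mu_1} I \mapsto g_b(x) \vp^{\mu_j} I$, so the identical $\vp$-adic comparison argument applies with $\mu_j$ in place of $\mu_1$. The main technical obstacle is the inductive step in (a): the $\ast$ entries are implicit and their precise interaction with the $x_{j,k}$'s must be tracked carefully, but the key observation that each $\vp^l$-coefficient of $\ast$ depends only on lower-index parameters is exactly what makes the $\vp$-adic matching propagate cleanly between columns.
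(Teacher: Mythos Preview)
Your approach differs substantially from the paper's. The paper does not split into $J/\JO$-pieces: by Proposition~\ref{ADLVwir}(i) every element of $X_{w_{0,r}}(b)$ is of the form $g_b(x)\vp^{\mu_{0,r}}I$ for some $x\in\aV$ (not merely $x\in\aL$), and the paper shows directly that $g_b(x)\vp^{\mu}K=g_b(y)\vp^{\mu}K$ forces $p=\vp^{-\mu}g_b(x)^{-1}g_b(y)\vp^{\mu}\in K$ to lie in $I$. The first column of $g_b(y)\vp^{\mu}=g_b(x)\vp^{\mu}p$ reads $y=\sum_j p_{j1}\vp^{(j-1)r}(b\sigma)^{j-1}(x)$; applying $b\sigma$ repeatedly and invoking Lemma~\ref{polygon} to rewrite $(b\sigma)^n(x)$ then determines all $p_{ij}$ and yields $p_{ij}\in\fp$ for $i<j$. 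No orbit decomposition is needed.

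Your step (a) is correct and in fact simpler than you make it: since $M,M'$ are lower unitriangular, so is $N=M^{-1}M'$, hence so is $p=\vp^{-\mu}N\vp^{\mu}$; thus $p_{ij}=0$ for $i<j$ and $p\in K$ already gives $p\in I$. The induction on the $x_j$ is superfluous (and your claim that $N_{j1}=x_j'-x_j$ is only literally true for $j=2$). The genuine gap is step (b). The phrase ``direct inspection of the shape of the reduction modulo $\vp$'' is not an argument: since $j=(h')^{-1}h$ is not known to preserve $\sL_0$, it has no action on $\sL_0/\vp\sL_0$, and I do not see how a mod-$\vp$ picture forces $j\in\JO$. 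One can close the gap, but by a different route: from $jgK=g'K$ one gets $jx\in jg\sL_0=g'\sL_0\subset\sL_0$ and $v_L(\det j)=0$, hence $jx\in\aL$; then $j\sL_0=j\,\gbred(x)\sL_0=\gbred(jx)\sL_0=\sL_0$ (using $jg_b(x)=g_b(jx)$), so $j\in\JO$. But this is essentially the paper's use of the $g_b$-parametrisation and the $b\sigma$-equivariance of $J$, and once you carry it out the detour through (a) and (b) buys nothing over the paper's direct argument.
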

\begin{proof}
To show (i), recall that any element in $X_{w_{0,r}}(b)$ is of the form $g_b(x)\vp^{\mu_{0,r}}I$ with $x\in \aV$.
So it suffices to show that for any $x,y\in \aV$, $g_b(x)\vp^{\mu_{0,r}}K=g_b(y)\vp^{\mu_{0,r}}K$ implies $g_b(x)\vp^{\mu_{0,r}}I=g_b(y)\vp^{\mu_{0,r}}I$.
If $g_b(y)\vp^{\mu_{0,r}}=g_b(x)\vp^{\mu_{0,r}}p$ with $p=(p_{ij})_{i,j}\in K$, then $$y=p_{1,1}x+p_{2,1}\vp^r b\sigma(x)+\cdots+p_{n,1}\vp^{(n-1)r}(b\sigma)^{n-1}(x).$$
By multiplying this equation by $b\sigma$, and by Lemma \ref{polygon},
we can represent each column of $g_b(y)\vp^{\mu_{0,r}}$ as a linear combination of $x, \vp^rb\sigma(x),\ldots, \vp^{(n-1)r}(b\sigma)^{n-1}(x)$, 
and their coefficients are nothing but $p_{ij}$.
This calculation shows $p_{ij}\in \fp\ (i<j)$, i.e., $p\in I$.
Therefore (i) follows.
The proof for (ii) is similar.
\end{proof}

For any $w\in \SW$, set $S_w=\max\{S'\subseteq S\mid \Ad(w)(S')=S'\}$.
\begin{prop}
\label{minimal}
For any $w\in \tW$, there exist $w'\in \SW$ and $v\in W_{S_{w'}}$ such that $w\rightarrow vw'$, where $W_{S_{w'}}\subseteq W_0$ is the subgroup generated by $S_{w'}$.
\end{prop}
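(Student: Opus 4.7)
The plan is to proceed by induction on $\ell(w)$, using the unique decomposition $w = v_0 w_0$ with $v_0 \in W_0$, $w_0 \in \SW$ and $\ell(w) = \ell(v_0) + \ell(w_0)$. If $v_0 \in W_{S_{w_0}}$ already, the empty chain gives the conclusion, so I assume $v_0 \notin W_{S_{w_0}}$ and look for a length-non-increasing conjugation step $w \mapsto sws$ that either strictly drops $\ell(w)$, triggering the induction hypothesis on $sws$, or produces a new decomposition $v_1 w_1$ that is strictly closer to the desired form, measured by a lexicographic invariant such as $(\ell(v_0), -|S_{w_0}|)$.

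The driving computation at each step is
\[
sws = (sv_0)(w_0 s w_0^{-1})\, w_0, \qquad s \in S,
\]
which naturally splits into two cases. When $\Ad(w_0)(s) = s' \in S$, the right factor $w_0$ is unchanged and the effect on $v_0$ is the twisted conjugation $v_0 \mapsto s v_0 s'$; here I would invoke the classical result of Geck--Pfeiffer, extended to the twisted setting by He, that the minimal length elements of a twisted conjugacy class in the finite Coxeter group $(W_0, S)$ under a diagram automorphism lie in the parabolic subgroup generated by the fixed simple reflections. Since $\Ad(w_0)$ restricts to a diagram automorphism of $W_{S_{w_0}}$ by the very definition of $S_{w_0}$, this pushes $v_0$ into $W_{S_{w_0}}$ through length-non-increasing simple conjugations, exactly as required.

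When $\Ad(w_0)(s) \notin S$, however, $w_0 s w_0^{-1}$ is a non-simple affine reflection, so $sws$ must be rewritten as $v_1 w_1$ with $w_1 \in \SW$ genuinely different from $w_0$, and the induction step is more delicate. Here my plan is to use the explicit length formula of Section \ref{notation} together with the characterization of $\SW$ via positivity of $w^{-1}\alpha$ for simple finite roots, and argue that such an $s$ can be chosen so that the conjugation is strictly length-decreasing whenever $v_0 \notin W_{S_{w_0}}$. This is the main obstacle: showing that the dichotomy between ``already reduced'' (meaning $v_0 \in W_{S_{w_0}}$) and ``strictly length-decreasable by conjugation'' is exhaustive amounts to verifying that any left descent $s_\alpha$ of $v_0$ with $s_\alpha \notin S_{w_0}$ produces a length-decreasing conjugation via the second case. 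Once this is in hand, combining the two cases closes the induction on $\ell(w)$.
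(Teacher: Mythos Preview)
The paper does not actually prove this proposition: it simply cites \cite[Proposition~3.1.1]{GH} and \cite[Theorem~2.5]{HN2}. So your proposal should be measured against those references rather than against any argument appearing here.

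Your overall strategy---induction on $\ell(w)$ via the coset decomposition $w=v_0w_0$ with $w_0\in\SW$, and analysing $sws$ according to whether $\Ad(w_0)(s)\in S$---is the right framework and is essentially how the cited proofs proceed. However, both of your cases have real gaps.

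In Case~A, the statement you attribute to Geck--Pfeiffer/He is not correct. It is not true that the minimal-length elements of a $\delta$-twisted conjugacy class in $(W_0,S)$ lie in the parabolic generated by the $\delta$-fixed simple reflections: for $W_0=S_4$ with $\delta$ swapping $s_1\leftrightarrow s_3$ and fixing $s_2$, the element $s_1$ is of minimal length in its $\delta$-class but $s_1\notin W_{\{s_2\}}$. More fundamentally, $\Ad(w_0)$ is only a \emph{partial} self-map of $S$; there is no diagram automorphism of $(W_0,S)$ available, so Geck--Pfeiffer does not apply directly. Restricting to $s\in S_{w_0}$ does give a genuine permutation of $S_{w_0}$, but then you are performing twisted conjugation of $v_0\in W_0$ using only moves from $S_{w_0}$, and nothing in Geck--Pfeiffer guarantees that such restricted moves suffice to push $v_0$ into $W_{S_{w_0}}$.

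In Case~B you correctly flag the crux: one must show that whenever $v_0\notin W_{S_{w_0}}$ there is a conjugation step that strictly decreases length (or changes $w_0$ in a controlled way). Your proposed criterion---take a left descent $s$ of $v_0$ with $s\notin S_{w_0}$---does always give $\ell(sws)\le\ell(w)$, but equality can and does occur, and such an $s$ may still land in Case~A (one can have $\Ad(w_0)(s)\in S$ yet $s\notin S_{w_0}$, since membership in $S_{w_0}$ requires the entire $\Ad(w_0)$-orbit to stay in $S$). The cited proofs close this gap not by a black-box appeal to finite Coxeter results but by a direct analysis of the affine root $w_0(\alpha_s)$ using the positivity characterization of $\SW$; this is where the genuine work lies, and your sketch does not yet supply it.
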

\begin{proof}
This is \cite[Proposition 3.1.1]{GH}.
See also \cite[Theorem 2.5]{HN2}.
\end{proof}

We use the notation of the proof of Corollary \ref{ADLVIwahori}.
\begin{lemm}
\label{closed}
For any $j$, the image of $X_{\vp^{\nu_j}\tau}(b)$ in $X_{\lambda_{1, r}}(b)$ is closed.
\end{lemm}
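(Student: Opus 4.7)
The plan is to leverage the properness of the projection $\pi\colon \cF lag \to \cG rass$. Since the fibers of $\pi$ are isomorphic to the perfection of the flag variety $G(\cO)/I \cong G/B$, which is projective, $\pi$ is proper. Let $Z$ denote the closure of $X_{\vp^{\nu_j}\tau}(b)$ in $\cF lag$; then $\pi(Z)$ is closed in $\cG rass$, and hence $\pi(Z)\cap X_{\lambda_{1,r}}(b)$ is closed in $X_{\lambda_{1,r}}(b)$. The task therefore reduces to proving the equality
\[
\pi(Z)\cap X_{\lambda_{1,r}}(b) \;=\; \pi(X_{\vp^{\nu_j}\tau}(b)),
\]
of which only the inclusion $\subseteq$ is nontrivial.

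For this inclusion, I will combine two ingredients. First, the Iwasawa-type splitting $K\vp^{\lambda_{1,r}}K=\bigsqcup_{w\in W_0\vp^{\lambda_{1,r}}W_0}IwI$ yields $\pi^{-1}(X_{\lambda_{1,r}}(b)) = \bigsqcup_{w\in W_0\vp^{\lambda_{1,r}}W_0}X_w(b)$. Second, the standard Bruhat-order closure of a Schubert cell gives $Z\subseteq \bigcup_{w\le \vp^{\nu_j}\tau}X_w(b)$. Intersecting, it remains to show: whenever $w<\vp^{\nu_j}\tau$ lies in $W_0\vp^{\lambda_{1,r}}W_0$ and $X_w(b)\neq\emptyset$, the image $\pi(X_w(b))$ is contained in $\pi(X_{\vp^{\nu_j}\tau}(b))$.

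The key tool for this containment is the finite Coxeter type hypothesis for $\lambda_{1,r}$. By the decomposition of Section \ref{Coxetertype} combined with Theorem \ref{class}, $X_{\lambda_{1,r}}(b)=\bigsqcup_{w''\in \SAdm(\lambda_{1,r})^\circ_{\cox}}\pi(X_{w''}(b))$ is a disjoint union, and moreover any $w''\in\SAdm(\lambda_{1,r})^\circ$ with $X_{w''}(b)\neq\emptyset$ must lie in $\SAdm(\lambda_{1,r})^\circ_{\cox}$. Applying Proposition \ref{minimal} to $w$ produces $w'\in\SW$ and $v\in W_{S_{w'}}$ with $w\to vw'$, while Proposition \ref{reduction} links the non-emptiness of $X_w(b)$ to that of $X_{vw'}(b)$; these constraints force $w'$ to be one of the Coxeter representatives $w_k\in \SAdm(\lambda_{1,r})^\circ_{\cox}$. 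Matching $w_k$ with the index $j$ via the isomorphisms $\phi_k$ and the $\Omega$-conjugations from the proof of Corollary \ref{ADLVIwahori} will then yield $\pi(X_w(b))\subseteq \pi(X_{\vp^{\nu_j}\tau}(b))$.

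The main obstacle is precisely this last matching step: showing that the reduced Coxeter class $w_k$ is consistent with the specific index $j$, rather than giving rise to some other component $\pi(X_{\vp^{\nu_{j'}}\tau}(b))$ with $j'\neq j$. An alternative strategy, avoiding the combinatorics, would exploit the explicit local coordinates from Corollary \ref{ADLVIwahori}: the piece $X_{\vp^{\nu_j}\tau}(b)_0\subseteq IvD_b\vp^{\mu_j}I/I$ is cut out by prescribed matrix-entry conditions, so one could try to verify directly that its image inside $IvD_b\vp^{\mu_j}K/K\subseteq K\vp^{\lambda_{1,r}}K/K$ is defined by closed conditions on matrix entries, with the $J$-equivariance of the whole picture promoting closedness of this base piece to closedness of the full $J$-orbit $\pi(X_{\vp^{\nu_j}\tau}(b))$.
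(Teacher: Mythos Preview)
Your overall strategy---use properness of $\pi$, intersect the closure with $\pi^{-1}(X_{\lambda_{1,r}}(b))$, and control the Bruhat-lower cells via Proposition~\ref{minimal} and Proposition~\ref{reduction}---is exactly the paper's. The divergence, and the genuine gap, is in what you try to prove about those lower cells.

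You aim to show that if $w<\vp^{\nu_j}\tau$ lies in $W_0\vp^{\lambda_{1,r}}W_0$ and $X_w(b)\neq\emptyset$, then $\pi(X_w(b))\subseteq\pi(X_{\vp^{\nu_j}\tau}(b))$, and for this you claim that the reduced element $w'\in\SW$ must be a Coxeter representative. That inference is not justified: the finite Coxeter type hypothesis only asserts $X_{w'}(b)=\emptyset$ for $w'\in\SAdm(\lambda_{1,r})^\circ\setminus\SAdm(\lambda_{1,r})^\circ_{\cox}$; it says nothing about $X_{vw'}(b)$ when $v\in W_{S_{w'}}$ is nontrivial. So from $X_{vw'}(b)\neq\emptyset$ you cannot conclude that $w'$ is Coxeter. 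Worse, the ``matching'' obstacle you worry about is illusory: every Coxeter representative $w'\in\SAdm(\lambda_{1,r})^\circ_{\cox}$ satisfies $\ell(w')=\ell(\vp^{\nu_j}\tau)$ (they are $\approx$ to the $\vp^{\nu_{j'}}\tau$), and since $\ell(vw')=\ell(v)+\ell(w')$ while $\ell(vw')\le\ell(w)<\ell(\vp^{\nu_j}\tau)$, the Coxeter case is excluded outright by the length constraint. Thus the situation you try to analyze never occurs.

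The paper's argument is both simpler and stronger: it shows directly that
\[
\bigcup_{\substack{w\in W_0\vp^{\lambda_{1,r}}W_0\\ w<\vp^{\nu_j}\tau}} X_w(b)=\emptyset,
\]
so the closure of $X_{\vp^{\nu_j}\tau}(b)$ inside $\pi^{-1}(X_{\lambda_{1,r}}(b))$ is $X_{\vp^{\nu_j}\tau}(b)$ itself. After the reduction to elements $vw'$ with $w'\in\SAdm(\lambda_{1,r})^\circ$, $v\in W_{S_{w'}}$, and $\ell(vw')<\ell(\vp^{\nu_j}\tau)$, the paper uses the explicit description of $w'$ (the two families of $w_0$ from the proof of Theorem~\ref{class}) and the explicit computation of $S_{w'}$ to verify $X_{vw'}(b)=\emptyset$ via Proposition~\ref{emptiness}. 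This is a genuine computation beyond what the finite Coxeter type hypothesis gives you, and it is the missing ingredient in your proposal. Your alternative strategy (checking closed conditions in the explicit coordinates of Corollary~\ref{ADLVIwahori}) would likely also work, but it is more laborious than the emptiness argument.
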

\begin{proof}
First note that the projection $\cF lag\rightarrow \cG rass$ is an inductive limit of projective morphisms.
In particular, it is universally closed.
So the projection $$\bigcup_{w\in W_0\vp^{\lambda_{1,r}} W_0} X_{w}(b)\rightarrow X_{\lambda_{1,r}}(b),\quad gI\mapsto gK$$ is also a closed map. 
By this fact, it suffices to show that each $X_{\vp^{\nu_j}\tau}(b)$ is closed in $\bigcup_{w\in W_0\vp^{\lambda_{1,r}}W_0} X_{w}(b)$, i.e.,
$$\bigcup_{\substack{w\in W_0\vp^{\lambda_{1,r}} W_0\\ w\le \vp^{\nu_j}\tau}}X_w(b)=X_{\vp^{\nu_j}\tau}(b).$$
Here $\le$ denotes the Bruhat order.
By Proposition \ref{reduction} and Proposition \ref{minimal}, we are reduced to show $X_{vw}(b)=\emptyset$ for $w\in \SW \cap W_0\vp^{\lambda_{1,r}} W_0=\SAdm(\lambda_{1,r})^\circ$ and $v\in W_{S_w}$ such that $\ell(vw)<\ell(\vp^{\nu_j}\tau)$.
By the proof of Theorem \ref{class}, any element in $\SW \cap W_0\vp^{\lambda_{1,r}} W_0$ can be written as $w_0\vp^{w_0^{-1}(\lambda_{1,r})}$, where
$$w_0=(1\ 2\ \cdots\ k)(n\ n-1\ \cdots\ l),\quad k<l,$$
or
$$w_0=(1\ 2\ \cdots\ l\ n\ n-1\ \cdots\ k),\quad k>l.$$
In the first (resp.\ second) case, $S_{w_0\vp^{w_0^{-1}(\lambda_{1,r})}}$ is equal to
$$\text{$\{s_{k+1},\ldots, s_{l-2}\}$ (resp.\ $\{s_{l+1},\ldots, s_{k-2}\}$).}$$
Similarly as in the proof of Theorem \ref{class}, it is easy to check $X_{vw}(b)=\emptyset$ using $\ell(vw)<\ell(\vp^{\nu_j}\tau)$.
\end{proof}

Set
\[
b^*={^tb}^{-1},\quad \lr^*=
  \begin{cases}
    (r,\ldots,r, -(n-1)r-\kb) & (i=0) \\
    (r+1, r,\ldots, r,-(n-1)r-1-\kb) & (i=1),
  \end{cases}
\]
where $r>0$ (resp.\ $r\geq 0$) if $\kb=0$ (resp.\ $1\le \kb<n$).
Then $b^*$ is a basic element in $G(L)$ with $\kappa(b^*)=-\kappa(b)$.
Let $J^*=J_{b^*}(F)$ and let $\JO^*=J^*\cap K$ be a maximal compact subgroup of $J^*$.
\begin{theo}
\label{maintheo}
Let $\A$ be a finite-dimensional affine space over $\F_q$ with dimension depending on $i, r$.
\begin{enumerate}[(i)]
\item We have a decomposition of $\Fq$-schemes $$X_{\lambda_{0,r}}(b)\cong \bigsqcup_{J/\JO}\Dr\times \A^\p.$$
\item We have a decomposition of $\Fq$-schemes $$X_{\lambda_{1,r}}(b)\cong \bigsqcup_{j=1}^{n-1} \bigsqcup_{J/\JO}\Dr\times \A^\p.$$
\item We have a decomposition of $\Fq$-schemes $$X_{\lambda_{0,r}^*}(b^*)\cong \bigsqcup_{J^*/\JO^*}\Dr\times \A^\p.$$
\item We have a decomposition of $\Fq$-schemes $$X_{\lambda_{1,r}^*}(b^*)\cong \bigsqcup_{j=1}^{n-1} \bigsqcup_{J^*/\JO^*}\Dr\times \A^\p.$$
\end{enumerate}
\end{theo}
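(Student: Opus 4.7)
The plan is to deduce parts (i) and (ii) from the finite Coxeter decomposition of Section~\ref{Coxetertype} combined with the Iwahori-level structure result (Corollary~\ref{ADLVIwahori}), and then obtain (iii) and (iv) from (i) and (ii) via the duality automorphism of $\cG rass$ given by Proposition~\ref{dual}. For part (i), since $\lambda_{0,r}$ is \CC\ by Theorem~\ref{class}, the definition furnishes
\[X_{\lambda_{0,r}}(b) = \bigsqcup_{w\in\SAdm(\lambda_{0,r})^\circ_{\cox}} \pi(X_w(b)).\]
The proof of Theorem~\ref{class} enumerates $\SAdm(\lambda_{0,r})^\circ$ via the cycles $(1\ 2\ \cdots\ j)$, $1 \le j \le n$; only $j = n$ is an $n$-cycle, so the index set reduces to $\{w_{0,r}\}$. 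Lemma~\ref{inj}(i) promotes $\pi|_{X_{w_{0,r}}(b)}$ to an isomorphism onto $X_{\lambda_{0,r}}(b)$, and Corollary~\ref{ADLVIwahori} with $\nu = \lambda_{0,r}$ supplies the structure $\bigsqcup_{J/\JO}\Dr \times \A^\p$.

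For part (ii), the Coxeter decomposition again produces $n-1$ pieces indexed by $\SAdm(\lambda_{1,r})^\circ_{\cox}$. I would match each such $w$ with one of the elements $\vp^{\nu_j}\tau$ ($j = 1, \ldots, n-1$) from Corollary~\ref{ADLVIwahori}, by composing an $\Omega$-conjugation (Proposition~\ref{reduction}(i)) with a chain of length-preserving $S$-conjugations (Proposition~\ref{reduction}(ii)), in the spirit of the isomorphisms $\phi_j$ constructed in the proof of Corollary~\ref{ADLVIwahori}. This yields $X_w(b) \cong X_{\vp^{\nu_j}\tau}(b)$ at the Iwahori level. Lemma~\ref{inj}(ii) provides the injectivity of $\pi|_{X_{\vp^{\nu_j}\tau}(b)}$, Lemma~\ref{closed} the closedness of its image, and Corollary~\ref{ADLVIwahori} the structure $\bigsqcup_{J/\JO}\Dr \times \A^\p$ on each piece.

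For parts (iii) and (iv), apply Proposition~\ref{dual}. A direct computation shows that if $g^{-1}b\sigma(g) \in K\vp^\lambda K$ and $h = {^tg}^{-1}$, then $h^{-1}b^*\sigma(h) \in K\vp^{-\lambda}K$. Sorting coordinates gives $K\vp^{-\lambda_{i,r}}K = K\vp^{\lambda_{i,r}^*}K$, so $*$ restricts to an isomorphism of $\F_q$-schemes $X_{\lambda_{i,r}}(b) \xrightarrow{\sim} X_{\lambda_{i,r}^*}(b^*)$. The same map $g \mapsto {^tg}^{-1}$ induces a group isomorphism $J \cong J^*$ sending $\JO$ to $\JO^*$, hence $J/\JO \cong J^*/\JO^*$. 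Thus (iii) follows from (i) and (iv) from (ii).

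The main obstacle is the identification step in part (ii): the elements of $\SAdm(\lambda_{1,r})^\circ_{\cox}$ are, by the proof of Theorem~\ref{class}, of the form $\vp^{\lambda_{1,r}}w$ with $w$ a Coxeter cycle $(1\ 2\ \cdots\ l\ n\ n-1\ \cdots\ l+1)$, whereas every $\vp^{\nu_j}\tau$ has $W_0$-part the fixed Coxeter cycle $\tau$. Constructing explicit reduction chains that bijectively match these two parametrizations, and verifying that the resulting Iwahori-level isomorphisms descend to a disjoint covering of $X_{\lambda_{1,r}}(b)$ after projecting to $\cG rass$, is where the bulk of the technical work will concentrate.
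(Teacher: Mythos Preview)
Your overall strategy matches the paper's: reduce to the Iwahori-level pieces via the finite Coxeter decomposition, invoke Corollary~\ref{ADLVIwahori} for their structure, and use duality for (iii) and (iv). The matching you flag as the ``main obstacle'' in (ii) is dispatched in the paper by the one-line verification $c_j\vp^{c_j^{-1}(\lambda_{1,r})}\approx \vp^{\nu_{n-j}}\tau$, so that is not where the real work lies.

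The genuine gap is in how you descend from the Iwahori level to the hyperspecial level. Lemma~\ref{inj} only gives set-theoretic injectivity of $\pi$ on each $X_{\vp^{\nu}\tau}(b)$; a bijective morphism of (perfect) schemes need not be an isomorphism, so your claim that ``Lemma~\ref{inj}(i) promotes $\pi|_{X_{w_{0,r}}(b)}$ to an isomorphism'' is unjustified, and the paper does not assert it either. Instead the paper uses the part of Corollary~\ref{ADLVIwahori} you do not invoke: each component $hX_{\vp^{\nu}\tau}(b)_0$ lies in a \emph{single Schubert cell} $IvD_b\vp^{\mu}I/I$, and the projection of such a cell into $KD_b\vp^{\mu}K/K$ is an immersion (cf.\ \cite[Lemme 2.2]{NP}). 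This exhibits each $hX_{\vp^{\nu}\tau}(b)_0$ as a locally closed subvariety of $X_{\lr}(b)$ isomorphic to $\Dr\times\A^\p$. One must then argue that these pieces are both closed and open: closedness comes from an explicit lattice description (for $i=0$, $X_{w_{0,r}}(b)_{\sL_0}$ is identified with $\{gK : g\sL_0\subset\sL_0,\ v_L(\det g)=C_{b,r}\}$; for $i=1$ an inductive refinement using the values $C_{b,r}^{j}$ together with Lemma~\ref{closed}), and openness follows because closed pieces covering a scheme locally of finite type are automatically open. Without this immersion-plus-clopen argument, your decomposition is only a set-theoretic partition, not a disjoint union of $\Fq$-schemes.
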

\begin{proof}
First we prove (i).
Since the map
$$IvD_b\vp^{\mu_{0,r}}I/I\rightarrow KD_b\vp^{\mu_{0,r}}K/K,\quad gI\mapsto gK$$
is an immersion (see, for example, \cite[Lemme 2.2]{NP}), 
it follows from Corollary \ref{ADLVIwahori} that the map $X_{w_{0,r}}(b)_{\sL_0}\rightarrow X_{\lambda_{0,r}}(b),\  gI\mapsto gK$ is also an immersion.
So we regard each $hX_{w_{0,r}}(b)_{\sL_0}(h\in J/\JO)$ as a locally closed subvariety of $X_{\lambda_{0,r}}(b)$.
Let $C_{b,r}=\frac{n(n-1)}{2}r+\sum_{j=1}^{n-1}\lfloor \frac{jk_0}{n_0}\rfloor$.
By the proof of Theorem \ref{class}, any element in $X_{\lambda_{0,r}}(b)$ can be written as $g_b(x)\vp^{\mu_{0,r}}$.
So, in the same way as \cite[Proposition 6.12]{CI2}, we can show that $X_{w_{0,r}}(b)_{\sL_0}$ is equal to the set
$$\{gK\in X_{\lambda_{0,r}}(b)\mid \text{$g\sL_0\subset \sL_0$ and $v_L(\det(g))=C_{b,r}$}\},$$
and hence is closed in $X_{\lambda_{0,r}}(b)$.
By Lemma \ref{inj}, the closed subvarieties $hX_{w_{0,r}}(b)_{\sL_0}\ (h\in J/\JO)$ form a disjoint cover of $X_{\lambda_{0,r}}(b)$.
Since $X_{\lambda_{0,r}}(b)$ is locally of finite type, $hX_{w_{0,r}}(b)_{\sL_0}$ is open.
This result and Corollary \ref{ADLVIwahori} prove (i).

Next we prove (ii).
We use the notation of the proof of Corollary \ref{ADLVIwahori}.
By the proof of Theorem \ref{class}, we have
$$\SAdm(\lambda_{1,r})^\circ_{\cox}=\{c_j\vp^{c_j^{-1}(\lambda_{1,r})}\mid c_j=(1\ 2\ \cdots \ j\ n\ n-1\ \cdots\ j+1), 1\le j\le n-1\}.$$
It is easy to check that $c_j\vp^{c_j^{-1}(\lambda_{1,r})}\approx \vp^{\nu_{n-j}}\tau$ for all $1\le j\le n-1$.
Thus we have
$$X_{\lambda_{1,r}}(b)=\bigsqcup_{j=1}^{n-1}\pi(X_{\vp^{\nu_j}\tau}(b)).$$

In a similar way as (i), it follows that the map $X_{\vp^{\nu_j}\tau}(b)_0\rightarrow X_{\lambda_{1,r}}(b),\ gI\mapsto gK$ is an immersion.
So we regard each $hX_{\vp^{\nu_j}\tau}(b)_0(h\in J/\JO)$ as a locally closed subvariety of $X_{\lambda_{1,r}}(b)$.
Let $C_{b,r}^j=\frac{n(n-1)}{2}r+j+\sum_{j=1}^{n-1}\lfloor \frac{jk_0}{n_0}\rfloor$ ($1\le j\le n-1$).
Then, we can show that $X_{\vp^{\nu_{n-j_0}}\tau}(b)_0$ is equal to the set
$$\{gK\in X_{\lambda_{1,r}}(b)\mid \text{$g\sL_0\subset \sL_0$ and $v_L(\det(g))=C_{b,r}^{j_0}$}\}\setminus 
\bigsqcup_{j=1}^{j_0-1}X_{\vp^{\nu_{n-j}}}(b).$$
In particular, by the case $j_0=1$, $X_{\vp^{\nu_{n-1}}\tau}(b)_0$ is closed in $X_{\lambda_{1,r}}(b)$.
By Lemma \ref{inj} and Lemma \ref{closed}, the closed subvarieties $X_{\vp^{\nu_1}\tau}(b),\ldots, X_{\vp^{\nu_{n-2}}\tau}(b)$ and $hX_{\vp^{\nu_{n-1}}\tau}(b)_0\ (h\in J/\JO)$ form a disjoint cover of $X_{\lambda_{1,r}}(b)$.
Since $X_{\lambda_{1,r}}(b)$ is locally of finite type, $hX_{\vp^{\nu_{n-1}}\tau}(b)_0$ is open.
So $X_{\vp^{\nu_{n-1}}\tau}(b)$ is open and $X_{\vp^{\nu_{n-2}}\tau}(b)_0$ is closed.
Repeating the same argument, we see that each $hX_{\vp^{\nu_{n-j}}}(b)_0$ is closed and open in $X_{\lambda_{1,r}}(b)$.
This result and Corollary \ref{ADLVIwahori} prove (ii).

Finally, (iii) (resp.\ (iv)) follows from (i) (resp.\ (ii)) by Proposition \ref{dual}.
\end{proof}

\begin{rema}
For $X_{\lambda}(b)\neq \emptyset$, we have an explicit dimension formula:
$$\mathop{\mathrm{dim}} X_{\lambda}(b)=\langle\rho, \lambda-\nu_b \rangle-\frac{1}{2}\mathrm{def}(b),$$
where $\nu_b$ is the Newton vector of $b$, $\rho$ is half the sum of the positive roots, and $\mathrm{def}(b)$ is the defect of $b$.
For split groups, the formula was obtained in \cite{GHKR} and \cite{Viehmann}.
Using this formula, we can compute the dimension of $\A$ in Theorem \ref{maintheo}.
\end{rema}

\begin{rema}
In \cite{CV}, Chen and Viehmann define the $J$-stratification of affine Deligne-Lusztig varieties, which coincides with the Bruhat-Tits stratification of $X(\lambda, b)_P$ if $(G, \lambda, P)$ is of Coxeter type, see \cite{Gortz2}.
If $b$ is superbasic, it follows from \cite[Proposition 3.4]{CV} that the decomposition in Theorem \ref{maintheo} is also an example of the $J$-stratification.
\end{rema}

\bibliographystyle{myamsplain}
\bibliography{reference}
\end{document}